\newcommand{\Ann}{\operatorname{Ann}}
\newcommand{\Hom}{\operatorname{Hom}}
\newcommand{\Ext}{\operatorname{Ext}}
\newcommand{\Supp}{\operatorname{Supp}}
\newcommand{\grade}{\operatorname{grade}}
\newcommand{\pgrade}{\operatorname{p.grade}}
\newcommand{\h}{\operatorname{ht}}
\newcommand{\fm}{\frak{m}}
\newcommand{\fp}{\frak{p}}
\newcommand{\fq}{\frak{q}}
\newtheorem{thm}{Theorem}[section]
\newtheorem{cor}[thm]{Corollary}
\newtheorem{lem}[thm]{Lemma}
\newtheorem{prop}[thm]{Proposition}
\newtheorem{defn}[thm]{Definition}
\newtheorem{rem}[thm]{Remark}
\begin{document}

\bibliographystyle{amsplain}

\date{}

\author{A. Mahdikhani, P. Sahandi and N. Shirmohammadi}

\address{Department of Mathematics, University of Tabriz, Tabriz,
Iran} \email{a.mahdikhani@tabrizu.ac.ir}

\address{Department of Mathematics, University of Tabriz, Tabriz,
Iran} \email{sahandi@ipm.ir}

\address{Department of Mathematics, University of Tabriz, Tabriz, Iran.}
\email{shirmohammadi@tabrizu.ac.ir}

\keywords{Cohen-Macaulay module, Non-Noetherian ring, parameter
sequence, trivial extension}

\subjclass[2010]{Primary  13C14, 13C15}

\title[Cohen-Macaulayness of trivial extensions]{Cohen-Macaulayness of trivial extensions}

\begin{abstract}
Our goal is to determine when the trivial extensions of
commutative rings by modules are Cohen-Macaulay in the
sense of Hamilton and Marley. For this purpose, we
provide a generalization of the concept of
Cohen-Macaulayness of rings to modules.
\end{abstract}

\maketitle

\section{Introduction}

Throughout this paper all rings are commutative, with identity, and all modules are
unital. The theory of Cohen-Macaulay rings admits a rich
theory in commutative \emph{Noetherian} rings. No attempts
have been made to develop the concept of Cohen-Macaulayness
to non-Noetherian rings until 1992. Then,
Glaz \cite{G0} begun an investigation on the notion of
Cohen-Macaulayness for non-Noetherian rings and, she
asked how one can define a non-Noetherian notion
of Cohen-Macaulayness such that the definition
coincides with the original one in the Noetherian
case, and that coherent regular rings are Cohen-Macaulay, see \cite[p. 220]{G}.

More recently, Hamilton and Marley \cite{HM} established a
definition of Cohen-Macaulayness for non-Noetherian rings. More
precisely, employing Schenzel's weakly proregular sequences
\cite{Sch}, they used the tool of \v{C}ech cohomology modules to
define the notion of parameter sequences. A parameter sequence such
that every truncation on the right is also a parameter sequence is
called a strong parameter sequence. In some sense, this is a
generalization of the system of parameters to the non-Noetherian
case. They then called a ring \emph{Cohen-Macaulay} if every strong
parameter sequence is a regular sequence. They showed that their
definition coincides with the original one in the Noetherian case
and that coherent regular rings are Cohen-Macaulay (in the sense of
new definition).

Let $R$ be a ring and $M$ be an $R$-module. In 1955, Nagata
construct a ring extension of $R$ called the \emph{trivial
extension} of $R$ by $M$, denoted here by $R\ltimes M$. This ring is
of particular importance in commutative algebra (cf. \cite{AW} and
\cite[Theorem 3.3.6]{BH}). One of the properties of trivial
extension is as follows: if $R$ is Noetherian local and $M$ is
finitely generated, then $R\ltimes M$ is Cohen-Macaulay if and only
if $R$ is Cohen-Macaulay and $M$ is maximal Cohen-Macaulay, see
\cite[Corollary 4.14]{AW}. Motivated by this and \cite[Example
4.3]{HM}, we wish to investigate whether the trivial extension
$R\ltimes M$ is Cohen-Macaulay with the definition of Hamilton and
Marley. The main ingredient in this investigation is to formulate a
definition for a module to be Cohen-Macaulay, which was left
unaddressed in \cite{HM}. So we find ourselves forced to extend the
notion of Cohen-Macaulay to modules.

The outline of the paper is as follows. In Section 2, we recall some
essential definitions and results on which we base our approach. In
Section 3, after defining weakly proregular sequences on modules, we
give a characterization of such sequences using the vanishing of
suitable \v{C}ech cohomology modules. We then define (strong)
parameter sequences on modules. Continually, after citing some
elementary properties of such sequences, we relate these sequences
to system of parameters of finitely generated modules in Noetherian
local rings. In Section 4, we define Cohen-Macaulayness of modules
over non-Noetherian rings. Among other things, we are able to
establish our main result which says when the trivial extension
$R\ltimes M$ is Cohen-Macaulay.

\section{Preliminaries}

Let $R$ be a ring, $I$ be an ideal of $R$ and  $M$ be an $R$-module.
Following \cite{BH}, a sequence ${\bf x}:=x_{1},\ldots,x_{\ell} \in
R$ is called a \emph{weak $M$-regular sequence} if $x_{i}$ is a
non-zero-divisor on $M/(x_{1},\ldots,x_{i-1})M$ for
$i=1,\ldots,\ell$. If, in addition, $M\neq{\bf x}M$, we call ${\bf
x}$ an \emph{$M$-regular sequence}. The \emph{classical grade} of
$I$ on $M$, denoted $\grade(I,M)$, is defined to be the supremum of
the lengths of all weak $M$-regular sequences contained in $I$, see
\cite{Ho}.

The \emph{polynomial grade} of $I$ on $M$ is defined by
\begin{displaymath}
\pgrade_R(I,M):=\lim_{m \to \infty }\grade(IR[t_{1},\ldots
,t_{m}],R[t_{1},\ldots ,t_{m}]\otimes_{R}M).
\end{displaymath}
It follows from \cite{A} and \cite{Ho} that
$$ \pgrade_R(I,M)=\sup \{ \grade(IS,S \otimes_{R} M)|S\text{ is a faithfully flat }R\text{-algebra} \} .$$

Let $x$ be an element of $R$. Let $C(x)$ denote the complex $0\to
R\to R_x\to 0$ where the differential is the natural localization
map. For a sequence ${\bf x}:=x_{1},\ldots,x_{\ell}$ of elements of
$R$, the \v{C}ech complex $C({\bf x})$ is inductively defined by
$C({\bf x}):=C(x_{1},\ldots,x_{\ell-1})\otimes_{R}C(x_{\ell})$. Then
we set $C({\bf x};M):=C({\bf x})\otimes_{R}M$. The \emph{$i$th
\v{C}ech cohomology} $H_{{\bf x}}^{i}(M)$ of $M$ with respect to the
sequence ${\bf x}$ is defined to be the $i$th cohomology of $C({\bf
x};M)$.

The following summarizes some essential properties of \v{C}ech cohomology modules.

In the sequel, for a finite sequence ${\bf x}$ of elements
of the ring $R$, $\ell({\bf x})$ denotes the length of ${\bf x}$.

\begin{prop}\label{ceck}
(see \cite[Proposition 2.1]{HM}) Let $R$ be a ring, ${\bf x}$
a finite sequence of elements of $R$ and $M$ an $R$-module.
\begin{enumerate}
 \item If ${\bf y}$ is a finite sequence of elements of $R$ such that
$\sqrt{{\bf y}R}=\sqrt{{\bf x}R}$, then $H^i_{\bf y}(M)\cong H^i_{\bf x}(M)$ for all $i$.
 \item (Change of rings) Let $f :R \to S$ be a ring homomorphism and $N$ an $S$-module. Then
$H^i_{\bf x}(N)\cong H^i_{f({\bf x})}(N)$ for all $i$.
\item (Flat base change) Let $f :R \to S$ be a flat ring homomorphism. Then
$H^i_{\bf x}(M)\otimes_RS\cong H^i_{\bf x}(M\otimes_RS)\cong H^i_{f({\bf x})}(M\otimes_RS)$ for all $i$.
\item $H^{\ell({\bf x})}_{\bf x}(M)\cong H^{\ell({\bf x})}_{\bf x}(R)\otimes_RM$.
\end{enumerate}
\end{prop}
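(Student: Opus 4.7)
My plan is to prove the four parts by unwinding the inductive definition $C(\mathbf{x}) = C(x_1, \ldots, x_{\ell-1}) \otimes_R C(x_\ell)$ of the \v{C}ech complex and exploiting its functorial properties. Parts (2)--(4) fall out quickly from general nonsense, while part (1) carries the main content.

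For part (2), the key observation is that localization commutes with base change: for any $x \in R$ and $S$-module $N$, the natural map $R_x \otimes_R N \to N_{f(x)}$ is an isomorphism of $S$-modules. Hence $C(x;N)$ and $C(f(x);N)$ coincide as complexes, and iterating the tensor product definition gives the same conclusion for an arbitrary sequence. Part (3) then combines (2) with flatness: exactness of $-\otimes_R S$ lets it pass through cohomology, yielding $H^i_{\mathbf{x}}(M) \otimes_R S \cong H^i(C(\mathbf{x};M) \otimes_R S) \cong H^i(C(\mathbf{x}; M \otimes_R S))$, and (2) replaces $\mathbf{x}$ by $f(\mathbf{x})$ on the right. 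For part (4), the top cohomology $H^{\ell(\mathbf{x})}_{\mathbf{x}}(R)$ is the cokernel of the last differential of $C(\mathbf{x})$; since $-\otimes_R M$ is right exact, tensoring the defining cokernel presentation with $M$ produces the cokernel of the last differential of $C(\mathbf{x};M)$, giving the asserted isomorphism.

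The main obstacle is part (1). By symmetry in $\mathbf{x}$ and $\mathbf{y}$, it suffices to show that adjoining a single element $y \in \sqrt{\mathbf{x} R}$ to $\mathbf{x}$ does not change the \v{C}ech cohomology. Exploiting the two-term shape of $C(y)$, the total complex $C(\mathbf{x},y;M) = C(\mathbf{x};M) \otimes_R C(y)$ fits into a short exact sequence of complexes
\[ 0 \to C(\mathbf{x};M) \to C(\mathbf{x},y;M) \to C(\mathbf{x};M_y)[-1] \to 0, \]
whose long exact sequence reduces the claim to showing $H^i_{\mathbf{x}}(M_y) = 0$ for all $i$. But $y^n \in \mathbf{x}R$ gives an identity $1 = \sum r_i x_i / y^n$ in $R_y$, so $\mathbf{x} R_y = R_y$; the \v{C}ech complex of a unit-generating sequence is contractible via a standard homotopy built from such an expression, which yields the desired vanishing. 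Iterating---first extending $\mathbf{x}$ to $(\mathbf{x}, \mathbf{y})$ one element at a time, then symmetrically contracting back down to $\mathbf{y}$---produces the isomorphism $H^i_{\mathbf{x}}(M) \cong H^i_{\mathbf{y}}(M)$.
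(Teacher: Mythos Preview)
The paper does not give its own proof of this proposition; it is stated as a summary of known facts with a pointer to \cite[Proposition 2.1]{HM}. There is therefore nothing in the paper to compare your argument against.

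That said, your sketch is the standard argument and is correct in substance. One small orientation slip in part~(1): in the total complex $C(\mathbf{x};M)\otimes_R C(y)$, the terms coming from the $R_y$ factor form the \emph{sub}complex and those from the $R$ factor form the quotient, so the short exact sequence should read
\[
0 \longrightarrow C(\mathbf{x};M_y)[-1] \longrightarrow C(\mathbf{x},y;M) \longrightarrow C(\mathbf{x};M) \longrightarrow 0,
\]
rather than the direction you wrote. (Equivalently, $C(\mathbf{x},y;M)$ is the mapping cone of the localization map $C(\mathbf{x};M)\to C(\mathbf{x};M_y)$.) This does not affect your conclusion: the associated long exact sequence still yields $H^i_{\mathbf{x},y}(M)\cong H^i_{\mathbf{x}}(M)$ once you know $H^i_{\mathbf{x}}(M_y)=0$ for all $i$, and your contractibility argument for the latter via $\mathbf{x}R_y=R_y$ is correct. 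The reductions in parts~(2)--(4) are fine as written.
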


Applying parts (2) and (4) of Proposition \ref{ceck} to the natural
homomorphism $R\to R/\Ann M$ together with \cite[Proposition
2.7]{HM} yields the next corollary.

\begin{cor}\label{dim}
Let $R$ be a ring and $M$ be a finitely generated
$R$-module of finite dimension. Then $H^{i}_{{\bf x}}(M)=0$ for all
$i>\dim M$.
\end{cor}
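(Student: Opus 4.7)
Let $S := R/\Ann_R M$ and let $f \colon R \twoheadrightarrow S$ denote the canonical surjection; set $\bar{\bf x} := f({\bf x})$. The reduction rests on two elementary observations: $M$ is naturally an $S$-module (being killed by $\Ann_R M$), and $\dim S = \dim R/\Ann_R M = \dim M$ since $\Supp_R M = V(\Ann_R M)$ for a finitely generated module.

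First I would apply Proposition~\ref{ceck}(2) to the surjection $f$ with the $S$-module $M$ to get the change-of-rings identification
\[
H^{i}_{\bf x}(M) \;\cong\; H^{i}_{\bar{\bf x}}(M)
\]
for every $i$. The task thereby reduces to proving $H^{i}_{\bar{\bf x}}(M) = 0$ over the ring $S$ whenever $i > \dim S$.

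For this last step, Proposition~\ref{ceck}(4), now read over $S$, identifies the top Čech cohomology with a tensor product,
\[
H^{\ell(\bar{\bf x})}_{\bar{\bf x}}(M) \;\cong\; H^{\ell(\bar{\bf x})}_{\bar{\bf x}}(S) \otimes_{S} M,
\]
and \cite[Proposition~2.7]{HM} supplies $H^{i}_{\bar{\bf x}}(S) = 0$ once $i > \dim S$. These pieces combine to give the desired vanishing.

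I do not anticipate any real obstacle: the argument is essentially a bookkeeping exercise in the change-of-rings formulas once the ring-level vanishing theorem of Hamilton-Marley is in hand. The one delicate point to monitor is that the vanishing must be obtained in every degree $i > \dim S$ rather than only the top degree $i = \ell(\bar{\bf x})$ directly addressed by part~(4); either \cite[Proposition~2.7]{HM} covers all such degrees uniformly, or one truncates $\bar{\bf x}$ to its first $i$ terms and re-runs the same top-degree argument in each codimension.
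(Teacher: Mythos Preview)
Your outline is exactly the paper's own: pass to $S=R/\Ann M$ via Proposition~\ref{ceck}(2), invoke \cite[Proposition~2.7]{HM} for the vanishing over the finite-dimensional ring $S$, and use Proposition~\ref{ceck}(4) to carry the ring-level vanishing to $M$. One caution about your fallback: truncating $\bar{\bf x}$ to its first $i$ terms and applying part~(4) computes $H^{i}_{\bar{x}_1,\ldots,\bar{x}_i}(M)$, which is \emph{not} the same module as $H^{i}_{\bar{\bf x}}(M)$ for the full sequence, so that contingency does not work as written; you must rely on \cite[Proposition~2.7]{HM} giving the vanishing in every degree above $\dim S$ (which it does), rather than only at the top.
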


The $i$th \emph{local cohomology} $H^i_I(M)$ of $M$ with respect to $I$ is defined by
$$H^i_I(M):=\varinjlim\Ext^i_R(R/I^n,M).$$
In the case that $R$ is Noetherian, one has $H^i_I(M)=H^i_{\bf
x}(M)$ for all $i$, where $I:={\bf x}R$, see \cite[Theorem
5.1.20]{BSh}.

Let $\fp$ a prime ideal of $R$. By $\h_M\fp$, we mean the Krull
dimension of the $R_\fp$-module $M_\fp$. Also, for an ideal $I$ of $R$
$$\h_MI:=\inf \{\h_M\fp|\fp \in \Supp M \cap V(I) \}.$$

\section{Weakly proregular and parameter sequences}

\subsection{Weakly proregular sequences}

It is mentioned in \cite{HM} that local cohomology and \v{C}ech
cohomology are not in general isomorphic over non-Noetherian rings.
In \cite{Sch}, Schenzel gave necessary and sufficient conditions on
a sequence ${\bf x}$ of a ring $R$ such that the isomorphism
$H^i_I(M)\cong H^i_{\bf x}(M)$ holds for all $i$ and $R$-modules
$M$, where $I:={\bf x}R$. Such sequences are called $R$-weakly
proregular sequences. In the following, we provide the module
theoretic version of this notion.

Let $R$ be a ring and $M$ be an $R$-module. For $x \in R$,
we use $\mathbb{K}_{\bullet}(x,M)$ to denote the Koszul complex
$$
0\longrightarrow M \stackrel{x}\longrightarrow M \longrightarrow 0.
$$
For a sequence ${\bf x} =x_{1},\ldots ,x_{\ell}$ the Koszul complex
$\mathbb{K}_{\bullet}({\bf x},M)$ is defined to be the complex
$\mathbb{K}_{\bullet}(x_1,M)\otimes_{R} \cdots \otimes_{R}
\mathbb{K}_{\bullet}(x_{\ell},M)$. The $i$th homology of
$\mathbb{K}_{\bullet}({\bf x},M)$ is denoted by $H_{i}({\bf x},M)$
and called the Koszul homology of the sequence ${\bf x}$ with
coefficients in $M$. For $m \geq n$, there exists a chain map
$\phi_{n}^{m}({\bf x},M): \mathbb{K}_{\bullet}({\bf x}^{m},M)\to
\mathbb{K}_{\bullet}({\bf x}^{n},M)$ given by
$$
\phi_{n}^{m}({\bf x},M)=\phi_{n}^{m}(x_{1},M)\otimes_{R} \cdots
\otimes_{R} \phi_{n}^{m}(x_{l},M),
$$
where, for all $x\in R$, $\phi_{n}^{m}(x,M)$
 is the chain map\\
 \begin{displaymath}
 \xymatrix{0\ar[r] &
            M \ar[r]^{x^{m}}\ar[d]_{x^{m-n}} &
            M \ar[r]\ar[d]^{=} & 0 \\
 0\ar[r]& M \ar[r]^{x^n}&
               M\ar[r] & 0.}
\end{displaymath}
Hence, $\{\mathbb{K}_{\bullet}({\bf x}^{m},M),\phi_{n}^{m}({\bf x},M)\}$ is an inverse system of
complexes. Note, for each $i$, the map $\phi_{n}^{m}({\bf x},M)_i$
induces a homomorphism of homology modules $H_{i}({\bf x}^m,M)\to H_{i}({\bf x}^n,M)$.
We also denote this induced homomorphism by $\phi_{n}^{m}({\bf x},M)_i$.
The sequence $ {\bf x} =x_{1},\ldots ,x_{\ell} $ is
called \emph{$M$-weakly proregular} if, for each $n$, there exists an
$ m \geq n $ such that the map $\phi_{n}^{m}({\bf x},M)_i:
H_{i}({\bf x}^{m},M)\to H_{i}({\bf x}^{n},M)$ is zero for all $i
\geq 1$ (see \cite{Sch}).
Note that an element $ x \in R $ is $M$-weakly proregular if and only
 if there exists an $ n\geq 1 $ such that $(0:_{M}x^{n})=(0:_{M}x^{n+1})$.

\begin{rem}\label{3.2}
Let ${\bf x}$ be a
finite sequence of elements of $R$.
\begin{enumerate}
 \item If ${\bf x}$ is an $M$-weakly proregular
 sequence, then so  is any permutation of ${\bf x}$.
 \item Any $M$-regular sequence is $M$-weakly proregular.
\end{enumerate}
\end{rem}

The following result provides another description of weakly
proregular sequences using \v{C}ech cohomology. Its proof is
inspired by the proof of \cite[Lemma 2.4]{Sch}. Here, for a sequence
${\bf x} =x_{1},\ldots ,x_{\ell}$, we use $H^{i}({\bf x},M)$ to
denote the $i$th cohomology of the complex
$\Hom_R(\mathbb{K}_{\bullet}({\bf x}),M)$, where
$\mathbb{K}_{\bullet}({\bf x}):=\mathbb{K}_{\bullet}({\bf x},R)$,
and we call it the $i$th Koszul cohomology of the sequence ${\bf x}$
with coefficients in $M$. It follows from \cite[Theorem 5.2.5]{BSh}
that $H^{i}_{{\bf x}}(M)\cong \varinjlim H^{i}({\bf x}^n,M)$.

\begin{thm}\label{3.3}
Let ${\bf x}$ be a finite
sequence of elements of $R$. Then the following conditions
are equivalent:
\begin{enumerate}
  \item ${\bf x}$ is $M$-weakly proregular.
  \item $H_{{\bf x}}^{i}(\Hom_{R}(M,E))=0 $ for all injective $R$-modules $E$ and
$i\neq0$.
\end{enumerate}
\end{thm}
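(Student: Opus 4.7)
The plan is to reduce the equivalence to a Koszul-theoretic duality and then transfer it to \v{C}ech cohomology via the colimit formula $H^i_{\bf x}(-) \cong \varinjlim H^i({\bf x}^n,-)$ already recorded from [BSh]. Concretely, I would first establish, for every injective $R$-module $E$ and every $n \geq 1$, a natural isomorphism
$$
 H^{i}({\bf x}^n, \Hom_R(M, E)) \;\cong\; \Hom_R(H_{i}({\bf x}^n, M), E)
$$
by combining the Hom--tensor adjunction $\Hom_R(\mathbb{K}_\bullet({\bf x}^n), \Hom_R(M, E)) \cong \Hom_R(\mathbb{K}_\bullet({\bf x}^n, M), E)$ with the exactness of $\Hom_R(-, E)$, which permits commuting $\Hom_R(-, E)$ with the passage to (co)homology. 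Naturality in the chain-complex variable guarantees that under this identification the transition map in the direct system on the left is carried to $\Hom_R(\phi_n^m({\bf x}, M)_i, E)$ on the right. Passing to $\varinjlim_n$ then yields
$$
 H^{i}_{\bf x}(\Hom_R(M, E)) \;\cong\; \varinjlim_{n} \Hom_R(H_{i}({\bf x}^n, M), E).
$$

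Granting this, (1)$\Rightarrow$(2) is formal: if for every $n$ there is an $m \geq n$ with $\phi_n^m({\bf x}, M)_i = 0$ for all $i \geq 1$, then every element of the direct system at stage $n$ is killed by the transition to stage $m$, so the colimit vanishes for $i \neq 0$.

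For (2)$\Rightarrow$(1), fix $n$ and, for each $i \in \{1, \dots, \ell({\bf x})\}$, choose an embedding $\iota_i \colon H_{i}({\bf x}^n, M) \hookrightarrow E_i$ into some injective $R$-module $E_i$. Since $\ell({\bf x})$ is finite, $E := \bigoplus_{i=1}^{\ell({\bf x})} E_i$ is again injective, and each $H_{i}({\bf x}^n, M)$ embeds into $E$ through its summand. The hypothesis gives $\varinjlim_k \Hom_R(H_{i}({\bf x}^k, M), E) = H^i_{\bf x}(\Hom_R(M, E)) = 0$ for every $i \geq 1$, so each $\iota_i$ represents the zero class in the colimit; hence there exists $m_i \geq n$ with $\iota_i \circ \phi_n^{m_i}({\bf x}, M)_i = 0$, and injectivity of $\iota_i$ forces $\phi_n^{m_i}({\bf x}, M)_i = 0$. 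Setting $m := \max_i m_i$ and factoring $\phi_n^m = \phi_n^{m_i} \circ \phi_{m_i}^m$ yields $\phi_n^m({\bf x}, M)_i = 0$ for every $i \geq 1$, so ${\bf x}$ is $M$-weakly proregular.

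The main technical obstacle is the bookkeeping required to establish the displayed Koszul duality together with the compatibility of the induced transition maps; once this naturality is in place, both directions reduce to routine manipulations. A secondary point, handled by using a \emph{finite} direct sum of injectives, is the need to find a single $E$ that detects the vanishing of $\phi_n^m({\bf x}, M)_i$ simultaneously for all $i \geq 1$ without invoking injective cogenerators over a possibly non-Noetherian ring.
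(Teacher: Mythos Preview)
Your proposal is correct and follows essentially the same route as the paper: both arguments rest on the Koszul duality $H^{i}({\bf x}^n,\Hom_R(M,E))\cong\Hom_R(H_i({\bf x}^n,M),E)$ obtained from adjunction and exactness of $\Hom_R(-,E)$, and then pass to the colimit to identify $H^i_{\bf x}(\Hom_R(M,E))$. The only difference is cosmetic: the paper treats one index $i$ at a time in the direction $(2)\Rightarrow(1)$, while you package all $i$ into a single finite direct sum of injectives and take $m=\max_i m_i$; this is a small gain in explicitness but not a new idea, since the Koszul complex has length $\ell({\bf x})$ and the max-of-finitely-many-$m_i$ step is implicit in the paper's argument as well.
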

\begin{proof}
Assume that ${\bf x}$ is $M$-weakly proregular and that $E$ is an
injective $R$-module. Then $\Hom_R(\mathbb{K}_{\bullet}({\bf
x}^{n}),\Hom_{R}(M,E))\cong \Hom_R(\mathbb{K}_{\bullet}({\bf
x}^{n})\otimes_R M,E)$. So
\begin{align*}
H^{i}({\bf x}^{n},\Hom_{R}(M,E))
& = H^{i}(\Hom_R(\mathbb{K}_{\bullet}({\bf x}^{n}),\Hom_{R}(M,E)))\\
 & \cong  H^{i}(\Hom_R(\mathbb{K}_{\bullet}({\bf x}^{n})\otimes_R M, E))\\
 &\cong   \Hom_R(H_{i}({\bf x}^{n},M),E )
\end{align*}
for all $i$. Hence
\begin{displaymath}
\varinjlim { H^{i}({\bf x}^{n},\Hom_{R}(M,E))}
\cong\varinjlim{\Hom_R(H_{i}({\bf x}^{n},M),E)}.
\end{displaymath}
By assumption, for all $n\in \mathbb{N}$, the homomorphism
$$
{\Hom_R(H_{i}({\bf x}^{n},M),E)}\longrightarrow {\Hom_R(H_{i}({\bf x}^{m},M),E)}
$$
is zero, for some $m\geq n$. Therefore
\begin{displaymath}
\varinjlim { H^{i}({\bf x}^{n},\Hom_{R}(M,E))}
=\varinjlim{\Hom_R(H_{i}({\bf x}^{n},M),E)}=0.
\end{displaymath}
So that $H_{{\bf x}}^{i}(\Hom_{R}(M,E))=0 $ for $i\neq0$.

Conversely, assume that $H_{{\bf x}}^{i}(\Hom_{R}(M,E))=0 $ for all
injective $R$-modules $E$ and $i\neq0$. Let $f:H_{i}({\bf
x}^{n},M)\to E$ denote an injection of $H_{i}({\bf x}^{n},M)$ into
an injective $R$-module $E$. Then $f\in {\Hom_R(H_{i}({\bf
x}^{n},M),E)\cong H^{i}({\bf x}^{n},\Hom_{R}(M,E))}$. Since
\begin{displaymath}
\varinjlim { H^{i}({\bf x}^{n},\Hom_{R}(M,E))}=0,
\end{displaymath}
then there exists an $m\geq n$ such that $\Hom_R({\phi}_{n}^{m}({\bf x},M),1_{E})=0$ which
means $1_{E}f{\phi}_{n}^{m}({\bf x},M)=0$. So ${\phi}_{n}^{m}({\bf x},M)=0$,
because $f$ is injective.
\end{proof}

The above theorem together with Proposition \ref{ceck}
immediately yields the following corollary.

\begin{cor} \label{3.4}
Assume that ${\bf x}$ and ${\bf y}$ are
finite sequences of $R$ such that $\sqrt{{\bf x}R}=\sqrt{{\bf y}R}$.
Then ${\bf x}$ is $M$-weakly proregular if and only if ${\bf y}$ is $M$-weakly proregular.
\end{cor}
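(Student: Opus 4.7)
The plan is to combine Theorem \ref{3.3} with Proposition \ref{ceck}(1) in a completely formal way; no module-theoretic work is needed beyond what those two results already supply.

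First I would invoke Theorem \ref{3.3} to translate both $M$-weak proregularity of ${\bf x}$ and of ${\bf y}$ into statements about the vanishing of \v{C}ech cohomology: ${\bf x}$ is $M$-weakly proregular if and only if $H^i_{\bf x}(\Hom_R(M,E)) = 0$ for every injective $R$-module $E$ and every $i \neq 0$, and analogously for ${\bf y}$. This reduces the problem to comparing the \v{C}ech cohomology modules attached to the two sequences, with coefficients in the (fixed, but arbitrary) injective-valued Hom module $\Hom_R(M,E)$.

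Next I would apply Proposition \ref{ceck}(1), which says that whenever $\sqrt{{\bf x}R} = \sqrt{{\bf y}R}$, one has $H^i_{\bf x}(N) \cong H^i_{\bf y}(N)$ for every $R$-module $N$ and every $i$. Specializing $N := \Hom_R(M,E)$ gives the natural isomorphism $H^i_{\bf x}(\Hom_R(M,E)) \cong H^i_{\bf y}(\Hom_R(M,E))$ for every injective $E$ and every $i$. Consequently the vanishing condition of Theorem \ref{3.3} holds for ${\bf x}$ precisely when it holds for ${\bf y}$, which yields the desired equivalence.

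There is no genuine obstacle; the only thing to keep in mind is that Proposition \ref{ceck}(1) holds for arbitrary $R$-modules, so it legitimately applies to $\Hom_R(M,E)$, and the hypothesis $\sqrt{{\bf x}R} = \sqrt{{\bf y}R}$ is used exactly once, at this step. I would present the proof in just two or three lines, citing Theorem \ref{3.3} and Proposition \ref{ceck}(1).
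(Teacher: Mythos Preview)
Your proposal is correct and follows precisely the paper's approach: the paper simply states that the corollary follows immediately from Theorem~\ref{3.3} together with Proposition~\ref{ceck}, which is exactly the combination you spell out.
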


As a consequence of the following theorem one obtains that any finite sequence of elements in a
Noetherian ring is weakly proregular on any finitely generated module.

\begin{thm}\label{3.6}
Let $R$ be a Noetherian ring, $I$ be an ideal of $R$ and $M$ be a
finitely generated $R$-module. Then $H_{I}^{i}(\Hom_{R}(M,E))=0$ for
all injective $R$-modules $E$ and $i\neq 0$.
\end{thm}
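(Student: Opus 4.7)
The plan is to reduce the vanishing to a pro-vanishing statement about $\operatorname{Tor}$ via the Hom--Tor adjunction and then invoke the Artin--Rees lemma. By definition $H^i_I(\Hom_R(M,E))=\varinjlim_n\Ext^i_R(R/I^n,\Hom_R(M,E))$. Since $E$ is injective, $\Hom_R(-,E)$ is exact, and for any projective resolution $P_\bullet\to R/I^n$ the canonical isomorphism of complexes $\Hom_R(P_\bullet,\Hom_R(M,E))\cong\Hom_R(P_\bullet\otimes_R M,E)$ passes to cohomology to yield
$$\Ext^i_R(R/I^n,\Hom_R(M,E))\cong\Hom_R(\operatorname{Tor}_i^R(R/I^n,M),E).$$
Under this isomorphism the direct system indexed by $n$ corresponds, via $\Hom_R(-,E)$, to the inverse system $\{\operatorname{Tor}_i^R(R/I^n,M)\}_n$ whose transition maps are induced by the surjections $R/I^m\to R/I^n$ for $m\ge n$.

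The key step is to show that this inverse system is pro-zero for $i\ge 1$: for every $n$ there exists $m\ge n$ such that $\operatorname{Tor}_i^R(R/I^m,M)\to\operatorname{Tor}_i^R(R/I^n,M)$ is the zero map. Fix a resolution $P_\bullet\to M$ by finitely generated free $R$-modules (available since $R$ is Noetherian and $M$ is finitely generated), set $Z_i=\ker(d_i)$, and compute $\operatorname{Tor}_i^R(R/I^n,M)=H_i(P_\bullet/I^nP_\bullet)$. A class in $\operatorname{Tor}_i^R(R/I^m,M)$ is represented by some $p\in P_i$ with $d_i(p)\in I^mP_{i-1}$, and its image in $\operatorname{Tor}_i^R(R/I^n,M)$ is the class of $p$ in $P_i/(Z_i+I^nP_i)$. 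Applying the Artin--Rees lemma to the finitely generated submodule $\mathrm{im}(d_i)\subseteq P_{i-1}$ produces a constant $c=c(i)\ge 0$ such that $I^mP_{i-1}\cap\mathrm{im}(d_i)\subseteq I^{m-c}\mathrm{im}(d_i)$ for all $m\ge c$. Hence $d_i(p)=d_i(q)$ for some $q\in I^{m-c}P_i$, so that $p-q\in Z_i$ and $p\in Z_i+I^{m-c}P_i$. Choosing $m\ge n+c$ places $p$ in $Z_i+I^nP_i$, forcing its image in $\operatorname{Tor}_i^R(R/I^n,M)$ to vanish.

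The conclusion is then formal: pro-vanishing of an inverse system is preserved by $\Hom_R(-,E)$, becoming a direct system in which every element is annihilated by some transition, whence the colimit vanishes. Combining with the adjunction gives
$$H^i_I(\Hom_R(M,E))\cong\varinjlim_n\Hom_R(\operatorname{Tor}_i^R(R/I^n,M),E)=0\qquad(i\neq 0),$$
as required. The main obstacle is the Artin--Rees argument in the middle step; the adjunction and the passage to the limit are purely homological bookkeeping once it is in place.
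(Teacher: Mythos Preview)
Your argument is correct. The adjunction $\Ext^i_R(R/I^n,\Hom_R(M,E))\cong\Hom_R(\operatorname{Tor}_i^R(R/I^n,M),E)$ is standard (using injectivity of $E$), the Artin--Rees computation showing that $\{\operatorname{Tor}_i^R(R/I^n,M)\}_n$ is pro-zero for $i\ge 1$ is carried out cleanly (and this is where finite generation of $M$ and Noetherianness of $R$ are genuinely used), and the passage to the colimit is indeed formal.

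The paper, however, proceeds quite differently. It first splits the injective $E$ as $H^0_I(E)\oplus E/H^0_I(E)$, disposing of the $I$-torsion summand immediately since $\Hom_R(M,H^0_I(E))$ is $I$-torsion. For the $I$-torsion-free summand it builds an injective resolution of $\Hom_R(M,E)$ directly: take a free resolution $F_\bullet\to M$, apply $\Hom_R(-,E)$, and observe that each $\Hom_R(F_i,E)$ is injective with $\Gamma_I(\Hom_R(F_i,E))=0$, so applying $\Gamma_I$ kills the whole resolution. Your approach is more computational and makes the role of Artin--Rees explicit; in effect you prove the stronger intermediate statement that the Tor pro-system is essentially zero, which is exactly the content of weak proregularity and meshes nicely with Theorem~\ref{3.3}. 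The paper's approach is slicker and avoids any explicit limit argument, relying instead on the Noetherian structure theory of injectives; as a side benefit, its proof does not actually use that $M$ is finitely generated.
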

\begin{proof}
Assume that $E$ is an injective $R$-module. Since the exact sequence
$0\to H^0_I(E)\to E\to E/H^0_I(E)\to 0$ is split by \cite[Corollary
2.1.5]{BSh}, one has $E\cong H^0_I(E)\oplus E/H^0_I(E)$. So that,
for all $i$, we have
$$
H_I^{i}(\Hom_{R}(M,E))\cong H_I^{i}(\Hom_{R}(M,H^0_I(E)))\oplus
H_I^{i}(\Hom_{R}(M,E/H^0_I(E))).
$$
It is easy to see that $\Hom_{R}(M,H^0_I(E))$ is $I$-torsion. Hence
$H_I^{i}(\Hom_{R}(M,H^0_I(E)))=0$ for all $i\neq 0$. Since
$E/H^0_I(E)$ is an injective $R$-module and $H^0_I(E/H^0_I(E))=0$,
then to complete the proof it is enough for us to show that
$H_I^{i}(\Hom_{R}(M,E))=0$ for the injective $R$-module $E$ with
additional condition that $E$ is $I$-torsion-free. For this, let
$$
\cdots \longrightarrow F_{1}\longrightarrow F_{0}
\longrightarrow M \longrightarrow 0
$$
be a free resolution of $M$. Then
$$
0\longrightarrow \Hom_{R}(M,E) \longrightarrow \Hom_{R}(F_{0},E)
\longrightarrow \Hom_{R}(F_{1},E) \longrightarrow \cdots
$$
is an augmented injective resolution of $\Hom_{R}(M,E)$ such that
$H^0_I(\Hom_{R}(F_i,E))=0$ for all $i$ since $H^0_I(E)=0$.
Therefore $H_I^{i}(\Hom_{R}(M,E))=0$ for all $i\neq 0$.
\end{proof}

The following lemma will be used later.

\begin{lem} \label{fwp}
Suppose that $f:R\to S$ is a flat ring homomorphism
and that $M$ is an $R$-module. If
${\bf x}$ is $M$-weakly proregular, then $f({\bf x})$
is $M\otimes_{R}S$-weakly proregular. The converse holds if $f$ is
faithfully flat.
\end{lem}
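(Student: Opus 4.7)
The plan is to translate weak proregularity into a statement about Koszul homology and exploit the compatibility of the Koszul complex with flat base change. Recall that for a flat map $f:R\to S$, the isomorphism of complexes
$$\mathbb{K}_\bullet({\bf x}^n,M)\otimes_R S\;\cong\;\mathbb{K}_\bullet(f({\bf x})^n,M\otimes_R S)$$
combined with the flatness of $S$ yields a natural isomorphism on homology
$$H_i({\bf x}^n,M)\otimes_R S\;\cong\;H_i(f({\bf x})^n,M\otimes_R S).$$
Moreover, since the chain maps $\phi_n^m({\bf x},M)$ are defined as tensor products of the one-variable chain maps $\phi_n^m(x_j,M)$, they are natural in the module variable; in particular, the induced map $\phi_n^m(f({\bf x}),M\otimes_R S)_i$ on homology corresponds under the above isomorphism to $\phi_n^m({\bf x},M)_i\otimes_R 1_S$.

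For the forward direction, suppose ${\bf x}$ is $M$-weakly proregular. Given $n$, pick $m\ge n$ with $\phi_n^m({\bf x},M)_i=0$ for all $i\ge 1$. Tensoring with $S$ preserves zero maps, so $\phi_n^m(f({\bf x}),M\otimes_R S)_i=0$ for all $i\ge 1$, showing $f({\bf x})$ is $(M\otimes_R S)$-weakly proregular.

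For the converse, assume $f$ is faithfully flat and $f({\bf x})$ is $(M\otimes_R S)$-weakly proregular. Given $n$, choose $m\ge n$ so that $\phi_n^m(f({\bf x}),M\otimes_R S)_i=0$ for $i\ge 1$. This means $\phi_n^m({\bf x},M)_i\otimes_R 1_S=0$. By flatness, $\mathrm{Im}(\phi_n^m({\bf x},M)_i)\otimes_R S=\mathrm{Im}(\phi_n^m({\bf x},M)_i\otimes_R 1_S)=0$, and then faithful flatness forces $\mathrm{Im}(\phi_n^m({\bf x},M)_i)=0$, i.e.\ $\phi_n^m({\bf x},M)_i=0$. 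Hence ${\bf x}$ is $M$-weakly proregular.

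There is no real obstacle here: once one commits to the Koszul-homology definition of weak proregularity, the whole argument is just the observation that flat base change commutes with Koszul homology and that (faithful) flatness detects vanishing of maps. One could alternatively route the argument through Theorem~\ref{3.3} and the flat base change formula in Proposition~\ref{ceck}(3), but that requires keeping track of how $\Hom_R(M,E)$ interacts with $-\otimes_R S$, which is strictly more awkward than arguing on the Koszul side.
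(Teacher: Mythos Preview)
Your proof is correct and is essentially a spelled-out version of the paper's one-line argument: the paper simply cites \cite[Proposition 1.6.7]{BH}, which is precisely the flat base change isomorphism for Koszul homology that you invoke, and the rest of your argument (compatibility of the maps $\phi_n^m$ with $-\otimes_R S$, and faithful flatness detecting vanishing) makes explicit what the paper leaves to the reader.
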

\begin{proof}
This easily follows from \cite[Proposition 1.6.7]{BH}.
\end{proof}

\subsection{Parameter sequences}

Let $(R,\fm)$ be a local Noetherian ring and $M$ be a finitely
generated $R$-module. A sequence of elements ${\bf x}$ in $R$ is
said to be a \emph{system of parameters} on $M$ if $M/{\bf x}M$ has
finite length and $\dim M=\ell({\bf x})$. In fact, ${\bf x}$ is a
system of parameters on $M$ if and only if $\h_{M}({\bf
x}R)=\ell({\bf x})=\dim M$.

Using homological properties of the rings instead of height
conditions, the authors in \cite{HM} extended the notion
of system of parameters in Noetherian local
rings to sequences in non-Noetherian ones called strong parameter
sequences. This subsection is devoted to generalize the notion of strong parameter
sequences to modules.

\begin{defn}
A finite sequence ${\bf x}$ of
elements of $R$ is called a parameter sequence on $M$
provided that the following
conditions hold:
\begin{enumerate}
   \item ${\bf x}$ is $M$-weakly proregular,
   \item ${\bf x}M\neq M$,
   \item $H_{{\bf x}}^{\ell({\bf x})}(M)_{\fp}\neq 0$
   for all prime ideals $\fp\in\Supp_{R}(\frac{M}{{\bf x}M})$.
\end{enumerate}
\end{defn}
The sequence ${\bf x}$ is called a \emph{strong parameter
sequence on $M$} if $x_{1},\ldots,x_{i}$
is a parameter sequence on $M$ for $i=1,\ldots,\ell({\bf x})$.
One may consider the empty sequence is a parameter sequence of length zero on
any $R$-module. The empty sequence will also be considered as a
regular sequence of length zero on any $R$-module.

Below, we state some elementary properties of parameter
sequences that will be used in the course of the paper.

\begin{prop}\label{5.3}
Let $R$ be a ring and $M$ be an $R$-module. Let ${\bf x}$ be a finite sequence of elements of $R$.
\begin{enumerate}
  \item Any permutation of a parameter sequence on $M$ is again a parameter sequence on $M$.
  \item Assume that $\sqrt{{\bf x}R}=\sqrt{{\bf y}R}$, $\ell({\bf x})=\ell({\bf y})$, ${\bf x}M\neq
M$ ${\bf y}M\neq M$ and $\Supp_{R}(\frac{M}{{\bf
x}M})=\Supp_{R}(\frac{M}{{\bf y}M})$. Then ${\bf x}$ is a parameter
sequence on $M$ if and only if ${\bf y}$ is a parameter sequence on
$M$.
  \item If $\pgrade_R({\bf x}R,M)=\ell({\bf x})$, then ${\bf x}$ is a parameter sequence on $M$.
  \item Every $M$-regular sequence is a strong parameter sequence on $M$.
  \item Let $f:R\longrightarrow S$ be a flat ring homomorphism. If ${\bf x}$
is a (strong) parameter sequence on $M$ and $\frac{M\otimes_RS}{f({\bf x})M\otimes_RS}\neq 0$,
then $f({\bf x})$ is a (strong) parameter sequence on $M\otimes_RS$. The converse
holds if $f$ is faithfully flat.
\end{enumerate}
\end{prop}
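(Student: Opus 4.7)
The plan is to verify each of the five items by checking, in turn, the three defining conditions of a parameter sequence---weak proregularity of ${\bf x}$ on $M$, the inequality ${\bf x}M\neq M$, and nonvanishing of $H^{\ell({\bf x})}_{\bf x}(M)_{\fp}$ for every $\fp\in\Supp_R(M/{\bf x}M)$---using the invariance and descent tools already established in Section 2 and Section 3.1.

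Parts (1) and (2) are essentially formal. A permutation of ${\bf x}$ generates the same ideal, so conditions (2) and (3) of the definition are unaffected and Remark \ref{3.2}(1) handles weak proregularity. For (2), Corollary \ref{3.4} handles weak proregularity, Proposition \ref{ceck}(1) gives the identification $H^{\ell({\bf x})}_{\bf x}(M)\cong H^{\ell({\bf y})}_{\bf y}(M)$, and the equality of supports together with the non-triviality hypotheses ${\bf x}M\neq M$ and ${\bf y}M\neq M$ are explicit in the statement.

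For (3), the hypothesis $\pgrade_R({\bf x}R,M)=\ell({\bf x})$ provides, for a suitable faithfully flat $R$-algebra $S$, an $(S\otimes_R M)$-regular sequence inside ${\bf x}S$ of full length. Such a regular sequence is automatically weakly proregular by Remark \ref{3.2}(2), and the converse direction of Lemma \ref{fwp} then descends weak proregularity to ${\bf x}$ on $M$; faithful flatness simultaneously rules out ${\bf x}M=M$ once $\ell({\bf x})\geq 1$. The delicate clause is the nonvanishing of $H^{\ell({\bf x})}_{\bf x}(M)_{\fp}$ on $\Supp_R(M/{\bf x}M)$: I would localize at such a $\fp$, note that polynomial grade passes to the localization, choose a prime $\fQ$ of $S$ lying over $\fp$ so that $R_{\fp}\to S_{\fQ}$ is faithfully flat, and combine Proposition \ref{ceck}(3),(4) with the fact that the top \v{C}ech cohomology of a regular sequence on a nonzero module is nonzero to conclude nonvanishing at $\fp$. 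Item (4) is then immediate from (3): an $M$-regular sequence attains polynomial grade equal to its length, and every initial segment of an $M$-regular sequence is again $M$-regular, giving the strong parameter property.

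For (5), flatness of $f$ together with Lemma \ref{fwp} yields weak proregularity of $f({\bf x})$ on $M\otimes_R S$, and condition (2) is part of the hypothesis. For condition (3) I would use the flat base change formula $H^{\ell({\bf x})}_{f({\bf x})}(M\otimes_R S)\cong H^{\ell({\bf x})}_{\bf x}(M)\otimes_R S$ from Proposition \ref{ceck}(3): given $\fQ$ in the relevant $S$-support with $\fp:=f^{-1}(\fQ)$, flatness forces $\fp\in\Supp_R(M/{\bf x}M)$, so $H^{\ell({\bf x})}_{\bf x}(M)_{\fp}\neq 0$ by hypothesis, and the faithful flatness of $R_{\fp}\to S_{\fQ}$ transports the nonvanishing to $\fQ$. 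When $f$ is itself faithfully flat, the converse is obtained by reversing each step, the main additional input being that every $\fp\in\Supp_R(M/{\bf x}M)$ admits a lift to some $\fQ$ of $S$ in the corresponding support via going-down. The strong-parameter statement in either direction follows by applying the parameter-sequence statement to each initial segment. I expect the main obstacle to be the localization/descent argument for condition (3) in part (3)---threading the existence of the regular sequence on $M\otimes_R S$ through Proposition \ref{ceck}(3),(4), and keeping careful track of how $\pgrade$ and supports behave under localization, is where essentially all of the technical bookkeeping lies.
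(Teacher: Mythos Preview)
Your treatment of (1), (2), (4), and (5) matches the paper's and is correct.

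For (3), your route to weak proregularity has a real gap. The hypothesis $\pgrade_R({\bf x}R,M)=\ell({\bf x})$ gives you, in some faithfully flat extension $S$, an $(M\otimes_R S)$-regular sequence ${\bf y}$ of length $\ell({\bf x})$ \emph{lying in the ideal} ${\bf x}S$---but not that $f({\bf x})$ itself is regular on $M\otimes_R S$. Remark~\ref{3.2}(2) only tells you that ${\bf y}$ is $(M\otimes_R S)$-weakly proregular, while Lemma~\ref{fwp} only descends weak proregularity of the sequence $f({\bf x})$, not of some other sequence in the ideal it generates. Without arranging $\sqrt{{\bf y}S}=\sqrt{{\bf x}S}$ (so that Corollary~\ref{3.4} would bridge the two), the descent step does not go through. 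The same issue recurs in your nonvanishing argument, where you again need the regular sequence, rather than merely some regular sequence in its ideal, to identify the top \v{C}ech cohomology.

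The paper sidesteps this entirely by using \cite[Proposition~2.7]{HM}, which characterises $\pgrade_R({\bf x}R,M)=\ell({\bf x})$ via the vanishing $H_i({\bf x},M)=0$ for all $i\geq 1$. Since polynomial grade depends only on the radical, the same vanishing holds for every power ${\bf x}^n$, and weak proregularity of ${\bf x}$ on $M$ follows straight from the definition---no auxiliary extension is needed. For the nonvanishing clause the paper again stays with \cite[Proposition~2.7]{HM}: one checks $\pgrade_R({\bf x}R_{\fp},M_{\fp})=\ell({\bf x})$ for each $\fp\in\Supp_R(M/{\bf x}M)$ (localisation does not decrease polynomial grade, and ${\bf x}M_{\fp}\neq M_{\fp}$ forces it to be at most $\ell({\bf x})$), and that proposition then yields $H^{\ell({\bf x})}_{\bf x}(M)_{\fp}\neq 0$ directly, without lifting primes to a faithfully flat extension.
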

\begin{proof}
For $(1)$ see Proposition \ref{ceck} and Remark \ref{3.2} and for
$(2)$ see, again, Proposition \ref{ceck} together with Theorem
\ref{3.3}.

To prove $(3)$, we first note that $\pgrade_R({\bf
x}^{n}R,M)=\pgrade_R({\bf x}R,M)=\ell({\bf x})$ by \cite[Section
5.5, Theorem 12]{No} and that $H_{i}({\bf x}^{n},M)=0$ for all
$i\geq 1$ by \cite[Proposition 2.7]{HM}. Let $\fp \in
\Supp_{R}(M/{\bf x}M)$. Hence ${\bf x}M_{\fp}\neq M_{\fp}$. Then
$\pgrade_R({\bf x}R_{\fp},M_{\fp})<\infty$ again by
\cite[Proposition 2.7]{HM}. Since localization does not decrease the
polynomial grade by \cite[Section 5.5, Exercise 10]{No} and the
polynomial grade is bounded above by the length of the sequence (see
\cite[Proposition 2.7]{HM}), we see that $\pgrade_R({\bf
x}R_{\fp},M_{\fp})=\ell({\bf x})$ for all $\fp\in \Supp_{R}(M/{\bf
x}M)$. Hence $ H_{{\bf x}}^{\ell}(M)_{\fp}\neq 0$ for all $\fp\in
\Supp_{R}(M/{\bf x}M)$ by \cite[Proposition 2.7]{HM}. Therefore
${\bf x}$ is a parameter sequence on $M$.

For $(4)$, notice that $\ell({\bf x})\geq \pgrade_R({\bf x}R,M)
\geq \ell({\bf x})$ since ${\bf x}M\neq M$. Thus $\pgrade_R({\bf x}R,M)=\ell({\bf x})$.
Therefore ${\bf x}$ is a strong parameter sequence on $M$ by $(3)$.

Finally for (5), assume that ${\bf x}$ is a parameter sequence
on $M$ and $\frac{M\otimes_RS}{f({\bf x})M\otimes_RS}\neq 0$.
Then, by Lemma \ref{fwp}, $f({\bf x})$ is
$M\otimes_RS$-weakly proregular sequence. Now, let
$\fq\in\Supp_S(\frac{M\otimes_RS}{f({\bf x})M\otimes_RS})$ and set $\fp:=f^{-1}(\fq)$. Then
$\fp\in\Supp_{R}(\frac{M}{{\bf x}M})$ and one has the isomorphism
\begin{equation}\label{iso}
H^{\ell({\bf x})}_{f({\bf x})}(M\otimes_RS)_{\fq}\cong H^{\ell({\bf x})}_{{\bf x}}(M)_{\fp}\otimes_{R_{\fp}}S_{\fq}.
\end{equation}
Since $S_{\fq}$ is a faithfully flat $R_{\fp}$-module,
one obtains that $f({\bf x})$ is a parameter sequence on $M\otimes_RS$.
To prove the converse, assume that $f$ is faithfully flat.
Again using Lemma \ref{fwp}, it is enough for us to show
that $H^{\ell({\bf x})}_{{\bf x}}(M)_{\fp}\neq0$ for
all $\fp\in\Supp_{R}(\frac{M}{{\bf x}M})$. Assume
that $\fp\in\Supp_{R}(\frac{M}{{\bf x}M})$. Since $f$
is flat, there exists a prime ideal $\fq$
of $S$ such that $\fp=f^{-1}(\fq)$. The isomorphism
$$(\frac{M\otimes_RS}{f({\bf x})M\otimes_RS})_{\fq}\cong(\frac{M}{{\bf x}M})_{\fp}\otimes_{R_{\fp}}S_{\fq}$$
shows that $\fq\in\Supp_S(\frac{M\otimes_RS}{f({\bf
x})M\otimes_RS})$. The isomorphism (\ref{iso}) now completes the
proof.
\end{proof}

Next we provide a description of parameter sequences using height condition.

\begin{prop}\label{5.4}
Let $R$ be a ring, $M$ be a
finitely generated $R$-module and ${\bf x}$ be a finite sequence of elements of $R$.
\begin{enumerate}
  \item If ${\bf x}$ is a parameter sequence on $M$, then $\h_{M}({\bf x}R)\geq \ell({\bf x})$.
  \item Further assume that $R$ is Noetherian.
Then ${\bf x}$ is a parameter sequence on $M$ if and only if
$\h_{M}({\bf x}R)=\ell({\bf x})$.
\end{enumerate}

\end{prop}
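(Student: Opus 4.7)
The plan for part (1) is to fix $\fp\in\Supp(M/{\bf x}M)$ and establish $\h_M\fp\geq\ell({\bf x})$; taking the infimum then yields the claim. The hypothesis that ${\bf x}$ is a parameter sequence on $M$ gives $H^{\ell({\bf x})}_{\bf x}(M)_\fp\neq 0$, and flat base change along $R\to R_\fp$ (Proposition \ref{ceck}(3)) identifies this with $H^{\ell({\bf x})}_{\bf x}(M_\fp)$. If $\dim M_\fp$ were finite and strictly smaller than $\ell({\bf x})$, Corollary \ref{dim} applied to the finitely generated $R_\fp$-module $M_\fp$ would force this module to vanish, a contradiction; the case $\dim M_\fp=\infty$ is trivial.

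For the forward direction of part (2), the inequality $\h_M({\bf x}R)\geq\ell({\bf x})$ is part (1). For the reverse inequality I would pick any prime $\fq$ minimal over ${\bf x}R+\Ann M$ (such $\fq$ exists because ${\bf x}M\neq M$) and note that in the Noetherian local ring $R_\fq/\Ann M_\fq$ the image of ${\bf x}R_\fq$ is primary to the maximal ideal and is generated by at most $\ell({\bf x})$ elements, so Krull's generalized principal ideal theorem delivers $\h_M\fq=\dim R_\fq/\Ann M_\fq\leq\ell({\bf x})$. For the backward direction I need to verify the three clauses of the definition. Weak proregularity is automatic: in the Noetherian setting with $M$ finitely generated, Theorem \ref{3.6} combined with Theorem \ref{3.3} and the Noetherian identification of \v{C}ech with local cohomology shows that every finite sequence is $M$-weakly proregular. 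The assumption $\h_M({\bf x}R)=\ell({\bf x})<\infty$ makes $\Supp M\cap V({\bf x}R)$ nonempty, so Nakayama's lemma applied in $R_\fp$ at any such $\fp$ forces ${\bf x}M\neq M$. The substantive clause is the non-vanishing of $H^{\ell({\bf x})}_{\bf x}(M)_\fp$: given $\fp\in\Supp(M/{\bf x}M)$, I would choose $\fq\subseteq\fp$ minimal over ${\bf x}R+\Ann M$. Krull's theorem together with the hypothesis gives $\dim M_\fq=\ell({\bf x})$, hence the image of ${\bf x}$ is a system of parameters on $M_\fq$ over $R_\fq/\Ann M_\fq$, and Proposition \ref{ceck}(1)(2) combined with the Noetherian case of $H^i_{\bf x}=H^i_I$ yields $H^{\ell({\bf x})}_{\bf x}(M_\fq)\cong H^{\ell({\bf x})}_{\fq R_\fq}(M_\fq)$, which is nonzero by Grothendieck's non-vanishing theorem. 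Consequently $H^{\ell({\bf x})}_{\bf x}(M)_\fq\neq 0$, and because this is a further localization of $H^{\ell({\bf x})}_{\bf x}(M)_\fp$, the latter is nonzero too.

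The main obstacle is precisely this third clause in the backward direction: at an arbitrary $\fp\in\Supp(M/{\bf x}M)$, Grothendieck non-vanishing cannot be applied directly to $M_\fp$ because ${\bf x}R_\fp+\Ann M_\fp$ need not be $\fp R_\fp$-primary when $\fp$ is not minimal over ${\bf x}R+\Ann M$. The remedy is to descend to a minimal prime $\fq\subseteq\fp$ where primarity is restored, and to use the (easy) observation that non-vanishing of the further localization $H^{\ell({\bf x})}_{\bf x}(M)_\fq$ automatically lifts to $H^{\ell({\bf x})}_{\bf x}(M)_\fp$.
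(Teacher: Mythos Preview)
Your argument is correct and follows the same route as the paper: part~(1) via Corollary~\ref{dim}, the forward direction of~(2) via Krull's principal ideal theorem applied in $R/\Ann M$, and the backward direction via Theorem~\ref{3.6} for weak proregularity together with Grothendieck non-vanishing at a minimal prime of $\Supp(M/{\bf x}M)$. Your explicit reduction from an arbitrary $\fp\in\Supp(M/{\bf x}M)$ to a minimal $\fq\subseteq\fp$ (using that $H^{\ell({\bf x})}_{\bf x}(M)_\fq$ is a localization of $H^{\ell({\bf x})}_{\bf x}(M)_\fp$) is a detail the paper leaves implicit, so on that point your write-up is in fact slightly more complete.
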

\begin{proof}
(1) If $\h_{M}({\bf x}R)=\infty$, there is nothing to prove. So assume that
$\h_{M}({\bf x}R)< \infty$. Then there exists $\fp\in\Supp(M)\cap
V({\bf x}R)$ such that $\h_{M}({\bf x}R)= \h_{M} \fp=\dim M_{\fp}$. Since ${\bf x}$ is
a parameter sequence on $M$, then $H_{{\bf x}}^{\ell({\bf x})}(M)_{\fp}\neq
0$. Therefore $\ell({\bf x})\leq\dim M_{\fp}=\h_{M}({\bf x}R)$ by Corollary \ref{dim}.

(2) Assume that ${\bf x}$ is a parameter sequence on $M$. Since
${\bf x}M\neq M$, then $\h_{M}({\bf x}R)<\infty$. By part (1), we have
$\h_{M}({\bf x}R)\geq \ell({\bf x})$ and by
Krull's Generalized Principal Ideal Theorem, we have
$\h_{M}({\bf x}R)\leq \h_{R}({\bf x}R) \leq \ell({\bf x})$. Then
$\h_{M}({\bf x}R)=\ell({\bf x})$.

Conversely assume that $\h_{M}({\bf x}R)=\ell({\bf x})$. By Theorem \ref{3.6}, any
sequence of elements in $R$ is $M$-weakly proregular.
Since $\h_{M}({\bf x}R)=\ell({\bf x})<\infty$, then $({\bf x})M\neq M$. Let
$\fp$ be a minimal element of $\Supp_{R}(M/{\bf x}M)$.
Then $\fp$ is a minimal
prime ideal over ${\bf x}R+\Ann(M)$. Hence
$\fp/\Ann(M)$ is a minimal prime ideal over ${\bf x}R(R/\Ann(M))$ which is
generated by $\ell({\bf x})$ elements. Then
$$
\dim M_{\fp}=\h_{M}\fp=\h\frac{\fp}{\Ann(M)}=\h{\bf x}R\frac{R}{\Ann(M)}=\ell({\bf x}).
$$
On the other hand, one has
$$\sqrt{{\bf x}R_{\fp}+\Ann(M)R_{\fp}}=\sqrt{({\bf x}R+\Ann(M))R_{\fp}}=\fp R_{\fp}.$$
Hence
$$
H^{\ell({\bf x})}_{{\bf x}R_{\fp}}(M_{\fp})=H^{\ell({\bf x})}_{{\bf
x}R_{\fp}+ \Ann(M)R_{\fp}}(M_{\fp})=H^{\ell({\bf x})}_{\fp
R_{\fp}}(M_{\fp})\neq0.
$$
Therefore ${\bf x}$ is a parameter sequence on $M$.
\end{proof}

The Noetherian assumption in Proposition \ref{5.4}(2) is crucial.
In fact, in every valuation domain of dimension 2, one can
choose a weakly proregular sequence $x, y$ such that $\h(x,y)=2$,
but $x,y$ is not a parameter sequence, see \cite[Example 3.7]{HM}.

\section{Cohen-Macaulay Modules}
\subsection{Definition and basic properties}
In \cite{G0} and \cite{G}, Glaz raised the question that
whether there exists a generalization of the notion of
Cohen-Macaulayness with certain desirable properties to
non-Noetherian rings. One of those is that every coherent
regular ring is Cohen-Macaulay. In this direction,
in \cite{HM}, it is defined a notion of
Cohen-Macaulayness for arbitrary commutative rings.
This subsection is devoted to extend the
definition of Cohen-Macaulayness
for commutative rings in the sense of \cite{HM} to modules.
\begin{defn}\label{CM}
An $R$-module $M$ is called a Cohen-Macaulay $R$-module if every strong parameter
sequence on $M$ is an $M$-regular sequence.
\end{defn}
This definition agrees with the usual definition of
Cohen-Macaulay finitely generated modules over Noetherian rings. Indeed, let
$R$ be a Noetherian ring and $M$ be a finitely
generated $R$-module. Assume that $M$ is Cohen-Macaulay
in the sense of Definition \ref{CM}.
To show that $M$ is Cohen-Macaulay with the usual definition
in the Noetherian case, it is enough to show that $\grade(I,M)=\h_M I$ for all proper
ideals $I$ of $R$. To prove this, assume that $I$ is a proper
ideal of $R$ and set $\h_M I=\ell$. Since $\h_M I=\h I(R/\Ann(M))$,
employing \cite[Theorem A.2]{BH} to the ring $R/\Ann(M)$ one
finds the elements $x_1,\ldots,x_{\ell}$ in $I$
such that $\h_M (x_1,\ldots,x_i)=i$ for all $i=0,\ldots,\ell$.
It follows from Proposition \ref{5.4}
that $x_1,\ldots,x_{\ell}$ is a strong parameter
sequence on $M$. Hence it is an $M$-regular sequence. This yields
that $\ell\leq\grade(I,M)\leq\h_M I=\ell$. Therefore $\grade(I,M)=\h_M I$.
The converse is true by Theorem \ref{6.2} and
Proposition \ref{6.5} below and \cite[Corollary 1.6.19]{BH}.

Let $R$ be a ring and $M$ be an  $R$-module. If $\dim M=0$, then $M$ is Cohen-Macaulay.
Indeed, in this situation, $M$ has not any parameter sequences.

Thanks to polynomial grade, Koszul homology, and \v{C}ech cohomology
of strong parameter sequences, our first result presents some
equivalent statements of Cohen-Macaulayness. It generalizes
\cite[Proposition 4.2]{HM} for modules. Its proof is mutatis
mutandis the same as that of \cite[Proposition 4.2]{HM}. But,
for the reader's convenience, we reprove it in the case of modules.

\begin{thm}\label{6.2}
Let $R$ be a ring and $M$ be an $R$-module. The following conditions are
equivalent:
\begin{enumerate}
  \item $M$ is Cohen-Macaulay.
  \item $\grade({\bf x}R,M)=\ell({\bf x})$ for every strong parameter sequence ${\bf x}$ of $M$.
  \item $\pgrade_R({\bf x}R,M)=\ell({\bf x})$ for every strong parameter sequence ${\bf x}$ of $M$.
  \item $H_{i}({\bf x},M)=0 $ for all $i\geq 1$ and for every strong parameter sequence ${\bf x}$ of $M$.
  \item $H_{{\bf x}}^{i}(M)=0 $ for all $i<\ell({\bf x})$ and for every strong parameter sequence ${\bf x}$ of $M$.
\end{enumerate}
\end{thm}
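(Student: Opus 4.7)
The plan is to prove $(1)\Rightarrow(2)\Rightarrow(3)$ and $(4)\Rightarrow(1)$ by hand, while deriving $(3)\Leftrightarrow(4)\Leftrightarrow(5)$ from \cite[Proposition 2.7]{HM}. Since any strong parameter sequence on $M$ is $M$-weakly proregular by the very definition of parameter sequence, the module-theoretic version of that proposition (which goes through once Theorem \ref{3.3} is available) identifies the conditions $\pgrade_R({\bf x}R,M)=\ell({\bf x})$, vanishing of $H_i({\bf x},M)$ for all $i\geq 1$, and vanishing of $H^i_{\bf x}(M)$ for all $i<\ell({\bf x})$; this delivers $(3)\Leftrightarrow(4)\Leftrightarrow(5)$ at once.

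The forward implications $(1)\Rightarrow(2)$ and $(2)\Rightarrow(3)$ are routine. For the former, an $M$-regular sequence ${\bf x}$ satisfies $\grade({\bf x}R,M)\geq\ell({\bf x})$, while the reverse inequality always holds because ${\bf x}M\neq M$. For the latter, $\grade({\bf x}R,M)\leq\pgrade_R({\bf x}R,M)\leq\ell({\bf x})$, the last inequality again from \cite[Proposition 2.7]{HM} together with ${\bf x}M\neq M$, so equality on the left forces equality throughout.

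The substance is $(4)\Rightarrow(1)$, which I would handle by induction on $\ell=\ell({\bf x})$. For $\ell=1$, one has $H_1(x_1,M)=(0:_M x_1)$, so $(4)$ gives $(0:_M x_1)=0$; combined with $x_1M\neq M$ from the parameter sequence definition, this makes $x_1$ an $M$-regular element. For the inductive step set ${\bf x}'=x_1,\ldots,x_{\ell-1}$; since ${\bf x}$ is a strong parameter sequence, so is ${\bf x}'$, and the hypothesis $(4)$ applied to ${\bf x}'$ together with the inductive hypothesis makes ${\bf x}'$ an $M$-regular sequence. The Koszul long exact sequence associated with the decomposition ${\bf x}=({\bf x}',x_\ell)$ contains the fragment
\begin{displaymath}
H_1({\bf x}',M)\longrightarrow H_1({\bf x},M)\longrightarrow H_0({\bf x}',M)\xrightarrow{\;x_\ell\;} H_0({\bf x}',M);
\end{displaymath}
with $H_1({\bf x}',M)=0$ by $M$-regularity of ${\bf x}'$ and $H_1({\bf x},M)=0$ by $(4)$ applied to ${\bf x}$, this forces $x_\ell$ to be a nonzerodivisor on $M/{\bf x}'M=H_0({\bf x}',M)$. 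Combined with ${\bf x}M\neq M$, the full sequence ${\bf x}$ is then $M$-regular, proving $(1)$.

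The main obstacle is the transplant of \cite[Proposition 2.7]{HM} from the ring case to the module case; Theorem \ref{3.3} is designed to be the module-theoretic surrogate needed for that transplant, and once it is in hand the Koszul-induction portion of the argument is essentially mechanical. The ``strong'' in strong parameter sequence plays its usual role of keeping all truncations of ${\bf x}$ within the class on which the inductive hypothesis is permitted to operate.
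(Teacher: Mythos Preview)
Your proposal is correct and follows essentially the same route as the paper. The only cosmetic differences are that the paper closes the cycle via $(3)\Rightarrow(1)$ rather than $(4)\Rightarrow(1)$, and in the inductive step it invokes the isomorphism $H_1({\bf x},M)\cong H_1(x_\ell,M/{\bf x}'M)$ from \cite[Proposition~1.6.13]{BH} (valid because ${\bf x}'$ is $M$-regular) instead of reading off the nonzerodivisor condition from the Koszul long exact sequence; these are the same argument in slightly different packaging.
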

\begin{proof}
$(1)\Rightarrow(2)$ Assume that ${\bf x}$ is a strong
parameter sequence on $M$; so, by assumption, ${\bf x}$ is $M$-regular sequence.
Hence $\ell({\bf x})\leq \grade({\bf x}R,M)$. One also
notices that $\grade({\bf x}R,M)\leq \pgrade_R({\bf x}R,M)$ by \cite[Page 149]{No} and that
$\pgrade_R({\bf x}R,M)\leq\ell({\bf x})$
by \cite[Section 5.5, Theorem 13]{No}. Therefore one has $\grade({\bf x}R,M)=\ell({\bf x})$.

$(2)\Rightarrow(3)$  Assume that ${\bf x}$
is a strong parameter sequence on $M$; so that
$\ell({\bf x})=\grade({\bf x}R,M)$. As in the $(1)\Rightarrow(2)$,
again using \cite[Page 149]{No} and \cite[Section 5.5, Theorem 13]{No},
one obtains that $\pgrade_R({\bf x}R,M)=\ell({\bf x})$.

$(3)\Rightarrow(1)$  Assume that ${\bf x}$ is a strong
parameter sequence on $M$. We proceed by induction on $\ell=\ell({\bf x})$ to show
that ${\bf x}$ is $M$-regular sequence. If
${\bf x}=x_1$, then $\pgrade_R(x_1R,M)=1$ and $0=H_{1}(x_1,M)=(0:_{M}x_1)$.
Hence $x_1$ is an $M$-regular element. Suppose that every strong
parameter sequence on $M$ of length at most $\ell-1$ is $M$-regular
sequence and that ${\bf x}$ is a strong parameter sequence on $M$ of
length $\ell$. Set ${\bf x'}=x_{1},\ldots,x_{\ell-1}$. Since
${\bf x'}$ is a strong parameter sequence on $M$, then by hypothesis
$\pgrade_R({\bf x'},M)=\ell-1$. Thus, by induction, ${\bf x'}$ is
an $M$-regular sequence. Let $M'=M/{\bf x'}M$. Since $\pgrade_R({\bf x},M)=\ell$, then
$H_{\ell-i}({\bf x},M)=0$ for all $i<\ell$ by \cite[Proposition 2.7]{HM}. Hence
$$
(0:_{M'}x_{\ell})=H_{1}(x_{\ell},M')=H_{1}(x_{\ell},M/{\bf x'}M)\cong
H_{1}({\bf x},M)=0
$$ by
\cite[Proposition 1.6.13]{BH}. This implies that $x_{\ell}$ is an
$M'$-regular element. Therefore ${\bf x}$ is an $M$-regular sequence.
Finally, notice that $(3)$, $(4)$ and
$(5)$ are equivalent by \cite[Proposition 2.7]{HM}.
\end{proof}

The Cohen-Macaulay property descends along faithfully flat extensions:

\begin{prop}\label{flat}
Let $f:R\to S$ be a faithfully flat ring homomorphism.
Let $M$ be an $R$-module. If $M\otimes_{R}S$ is Cohen-Macaulay
$S$-module, then $M$ is Cohen-Macaulay.
\end{prop}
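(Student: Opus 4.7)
The plan is to take an arbitrary strong parameter sequence ${\bf x}=x_1,\ldots,x_\ell$ on $M$ and show it is $M$-regular, using the Cohen-Macaulay property of $M\otimes_R S$ together with faithful flatness. The proof is short because most of the machinery is already packaged into Proposition \ref{5.3}(5); the main point is to transport the sequence across $f$, apply the hypothesis, and then descend.

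First I would verify that the hypothesis of Proposition \ref{5.3}(5) is met. Since ${\bf x}$ is a parameter sequence on $M$, we have ${\bf x}M\neq M$, i.e.\ $M/{\bf x}M\neq 0$. Faithful flatness of $f$ gives
\begin{displaymath}
\frac{M\otimes_R S}{f({\bf x})(M\otimes_R S)}\cong \left(\frac{M}{{\bf x}M}\right)\otimes_R S\neq 0.
\end{displaymath}
By Proposition \ref{5.3}(5), $f({\bf x})$ is therefore a strong parameter sequence on $M\otimes_R S$. Since $M\otimes_R S$ is Cohen-Macaulay, this strong parameter sequence is $(M\otimes_R S)$-regular.

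Next I would descend regularity to $M$. For each $i=1,\ldots,\ell$, set $M_{i-1}:=M/(x_1,\ldots,x_{i-1})M$ and consider the multiplication map $\mu_i:M_{i-1}\to M_{i-1}$ given by $x_i$. Because $S$ is flat over $R$, tensoring with $S$ identifies $\ker(\mu_i)\otimes_R S$ with the kernel of multiplication by $f(x_i)$ on $M_{i-1}\otimes_R S\cong(M\otimes_R S)/f(x_1,\ldots,x_{i-1})(M\otimes_R S)$. The regularity of $f({\bf x})$ on $M\otimes_R S$ forces this latter kernel to vanish, and faithful flatness then yields $\ker(\mu_i)=0$. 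Combined with ${\bf x}M\neq M$, this says ${\bf x}$ is an $M$-regular sequence, proving $M$ is Cohen-Macaulay.

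I do not anticipate a real obstacle here: once one invokes Proposition \ref{5.3}(5), the argument reduces to the standard fact that regularity of a sequence descends under faithfully flat extensions. The only point to be careful about is the nonvanishing condition $\frac{M\otimes_R S}{f({\bf x})(M\otimes_R S)}\neq 0$ required by Proposition \ref{5.3}(5), which is precisely where faithful flatness (rather than mere flatness) is used the first time.
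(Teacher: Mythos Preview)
Your proof is correct. Both you and the paper start the same way: given a strong parameter sequence ${\bf x}$ on $M$, invoke Proposition \ref{5.3}(5) (with the nonvanishing of $(M/{\bf x}M)\otimes_R S$ supplied by faithful flatness) to conclude that $f({\bf x})$ is a strong parameter sequence on $M\otimes_R S$. The difference lies only in how the Cohen-Macaulay hypothesis on $M\otimes_R S$ is brought back down to $M$. You use the definition directly: $f({\bf x})$ is $(M\otimes_R S)$-regular, and regularity descends along a faithfully flat map by the kernel argument you wrote out. The paper instead appeals to the characterization in Theorem \ref{6.2}(3): it shows $\pgrade_R({\bf x}R,M)=\ell({\bf x})$ by translating polynomial grade into vanishing of \v{C}ech cohomology (via \cite[Proposition 2.7]{HM}) and then using flat base change $H^i_{\bf x}(M)\otimes_R S\cong H^i_{\bf x}(M\otimes_R S)$ together with faithful flatness. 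Your route is more elementary and self-contained, needing nothing beyond Proposition \ref{5.3}(5) and a standard descent of regular sequences; the paper's route has the virtue of illustrating Theorem \ref{6.2} and the base-change machinery in action.
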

\begin{proof}
Assume that $M\otimes_{R}S$ is Cohen-Macaulay $S$-module. Let ${\bf x}$
be a strong parameter sequence on $M$. Then, by Proposition
\ref{5.3}(5), $f({\bf x})$ is strong parameter
sequence on $M\otimes_{R}S$. Hence, the assumption
together with Theorem \ref{6.2}, \cite[Section 5.5, Theorem 19]{No}
and \cite[Proposition 2.7]{HM} yields that \begin{align*}
 \ell({\bf x})
 & = \ell(f({\bf x}))\\
 & = \pgrade_R(f({\bf x})S,M\otimes_{R}S) \\
 & = \pgrade_R({\bf x}R,M\otimes_{R}S) \\
 & =\sup\{k\geq0\mid H^{i}_{{\bf x}}(M\otimes_{R}S)=0\text{ for  all }i<k \} \\
 & =\sup\{k\geq0\mid H^{i}_{{\bf x}}(M)\otimes_{R}S=0\text{ for  all }i<k \} \\
 & =\sup\{k\geq0\mid H^{i}_{{\bf x}}(M)=0\text{ for  all }i<k \} \\
 & = \pgrade_R({\bf x}R,M).
\end{align*}
Therefore $M$ is Cohen-Macaulay.
\end{proof}

One can immediately obtain the following corollaries.

\begin{cor}
Let $(R,\fm)$ be a quasi-local ring and $M$ an $R$-module. If
$M\otimes_{R}\widehat{R}$ is Cohen-Macaulay $\widehat{R}$-module
where $\widehat{R}$ is the $\fm$-adic completion of $R$,
then $M$ is Cohen-Macaulay.
\end{cor}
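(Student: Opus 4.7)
The plan is to deduce this corollary as an immediate consequence of Proposition \ref{flat}: essentially the only ingredient needed beyond that proposition is the faithful flatness of the completion map $R \to \widehat{R}$.

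First I would observe that, since $(R,\fm)$ is quasi-local, the $\fm$-adic completion $\widehat{R}$ is again quasi-local with maximal ideal $\fm\widehat{R}$, and the canonical homomorphism $f\colon R \to \widehat{R}$ is a local ring homomorphism. I would then invoke the standard fact that this $f$ is faithfully flat; for a local homomorphism faithful flatness reduces to flatness, which is the content one needs to cite (classical for Noetherian $(R,\fm)$, and otherwise the point at which extra hypotheses or a reference would be required).

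With faithful flatness of $f$ in hand, the proof is a one-liner: the hypothesis is precisely that $M\otimes_R\widehat{R}$ is a Cohen-Macaulay $\widehat{R}$-module, so applying Proposition \ref{flat} to $f\colon R\to \widehat{R}$ and the $R$-module $M$ yields that $M$ is Cohen-Macaulay. There is no Koszul or \v{C}ech computation to perform here; all of that work was already absorbed into Proposition \ref{flat}, which in turn rests on the flat base change in Proposition \ref{ceck}(3) and the parameter-sequence descent in Proposition \ref{5.3}(5).

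The main (and only) obstacle is thus the faithful flatness of $R\to \widehat{R}$ in the quasi-local, possibly non-Noetherian setting. For Noetherian quasi-local $R$ this is entirely standard, and in that regime the corollary is formal; outside of it, the statement quietly depends on either an additional hypothesis guaranteeing flatness of the completion or a citation to this effect.
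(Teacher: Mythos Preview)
Your approach matches the paper's exactly: the corollary is stated immediately after Proposition~\ref{flat} with no separate proof, the intended argument being precisely the one-line application of that proposition to the completion map $R\to\widehat{R}$. Your caveat about faithful flatness of the $\fm$-adic completion in the non-Noetherian quasi-local setting is well taken---the paper does not address it---but this is a concern about the statement itself rather than a divergence in method.
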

\begin{cor}
Let $R$ be a ring, $M$ an $R$-module and $t$ an indeterminate over
$R$. If $M\otimes_RR[t]$ is Cohen-Macaulay $R[t]$-module, then $M$
is Cohen-Macaulay.
\end{cor}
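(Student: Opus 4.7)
The plan is to recognize that this corollary is essentially an immediate instance of Proposition \ref{flat}. What one needs to observe is that the natural inclusion $f \colon R \to R[t]$ is a faithfully flat ring homomorphism: it is flat because $R[t]$ is a free $R$-module on the basis $\{1,t,t^2,\ldots\}$, and it is faithfully flat because $\mathfrak{m}R[t] \neq R[t]$ for every maximal ideal $\mathfrak{m}$ of $R$ (equivalently, extension and contraction of ideals behave well along $f$).

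Granting this, the argument is a single step: assume $M \otimes_R R[t]$ is Cohen-Macaulay as an $R[t]$-module; then Proposition \ref{flat}, applied to $f$, yields that $M$ is Cohen-Macaulay as an $R$-module.

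There is essentially no obstacle here, since all the work has already been done in Proposition \ref{flat}. The only thing one might choose to expand on, if one wanted, is a brief justification that $R \to R[t]$ is faithfully flat, but this is a standard textbook fact (see, e.g., \cite[Proposition 1.6.7]{BH} applied in the same spirit as in Lemma \ref{fwp}).
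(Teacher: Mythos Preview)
Your proposal is correct and matches the paper's own approach: the corollary is stated immediately after Proposition~\ref{flat} with the remark that it follows at once, and the intended argument is precisely to apply that proposition to the faithfully flat extension $R\to R[t]$.
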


\begin{prop}\label{6.5}
Let $R$ be a ring and $M$ be a finitely generated $R$-module. If $M _{\fm}$ is
Cohen-Macaulay $R_{\fm}$-module for all maximal ideals $\fm$ of $R$,
then $M$ is a Cohen-Macaulay $R$-module.
\end{prop}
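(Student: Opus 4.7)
The plan is to show that any strong parameter sequence ${\bf x}=x_1,\ldots,x_\ell$ on $M$ is $M$-regular. The condition ${\bf x}M\neq M$ is built into the definition of parameter sequence, so the nontrivial content is that $x_i$ is a non-zero-divisor on $M/(x_1,\ldots,x_{i-1})M$ for each $i$. Fix $i$ and set $P:=M/(x_1,\ldots,x_{i-1})M$; we aim at $(0:_P x_i)=0$. Since vanishing is a local property, it suffices to prove $(0:_P x_i)_\fm=0$ for every maximal ideal $\fm$ of $R$, i.e.\ that $x_i/1$ is a non-zero-divisor on $P_\fm=M_\fm/(x_1,\ldots,x_{i-1})M_\fm$.

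To bring the hypothesis into play, I would use Proposition \ref{5.3}(5) with the flat homomorphism $R\to R_\fm$: if $x_1,\ldots,x_j$ is a parameter sequence on $M$ and $\fm\in\Supp(M/(x_1,\ldots,x_j)M)$, then $x_1/1,\ldots,x_j/1$ is a parameter sequence on $M_\fm$. Thus, once we know $\fm\in\Supp(M/(x_1,\ldots,x_i)M)$, the inclusion of ideals gives $\fm\in\Supp(M/(x_1,\ldots,x_j)M)$ for every $j\leq i$, and we deduce that $x_1/1,\ldots,x_i/1$ is a strong parameter sequence on $M_\fm$.

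The main step is verifying the support condition $\fm\in\Supp(M/(x_1,\ldots,x_i)M)$, and this is where the Noetherian-style Nakayama argument enters. Arguing by contradiction, suppose $x_i/1$ is a zero-divisor on $P_\fm$; in particular $P_\fm\neq 0$. If $x_i\notin\fm$, then $x_i/1$ is a unit in $R_\fm$ and cannot be a zero-divisor on $P_\fm$, a contradiction. So $x_i\in\fm$. Since $M$ is finitely generated, $P_\fm$ is a finitely generated $R_\fm$-module with $x_i$ in the maximal ideal, and Nakayama's lemma yields $x_iP_\fm\subsetneq P_\fm$, i.e.\ $(P/x_iP)_\fm\neq 0$. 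As $P/x_iP=M/(x_1,\ldots,x_i)M$, the required support condition follows.

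Combining the two steps: the assumption $(0:_P x_i)\neq 0$ produces a maximal ideal $\fm$ at which $x_i/1$ is a zero-divisor on $P_\fm$; the Nakayama reduction then makes $x_1/1,\ldots,x_i/1$ a strong parameter sequence on $M_\fm$; the Cohen-Macaulay hypothesis on $M_\fm$ forces it to be $M_\fm$-regular, so $x_i/1$ is a non-zero-divisor on $P_\fm$ after all, a contradiction. Hence $(0:_P x_i)=0$ for each $i$ and ${\bf x}$ is $M$-regular. The trickiest link in the chain is the Nakayama step promoting ``$\fm\in\Supp P$'' to ``$\fm\in\Supp(P/x_iP)$'', which is exactly what lets Proposition \ref{5.3}(5) deliver a \emph{strong} parameter sequence on $M_\fm$ rather than merely a parameter sequence on some truncation, and it is the place where the assumption that $M$ is finitely generated is genuinely used.
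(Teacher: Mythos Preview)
Your proof is correct and follows the same approach as the paper's: localize at a maximal ideal, use Proposition~\ref{5.3}(5) together with Nakayama (this is where finite generation enters) to obtain a strong parameter sequence on $M_\fm$, and invoke the Cohen--Macaulay hypothesis there. The paper's version is terser---it fixes $\fm\supseteq{\bf x}R$, notes $M_\fm\neq({\bf x}/1)M_\fm$ by finite generation, and concludes---whereas you argue index by index and locate $\fm$ via the contradiction assumption, which makes explicit why only $(x_1,\ldots,x_i)\subseteq\fm$ (rather than the full ${\bf x}R\subseteq\fm$) is needed at each step.
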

\begin{proof}
Assume that ${\bf x}=x_{1},\ldots,x_{\ell}$ is a strong parameter sequence on
$M$ and that $\fm$ is a maximal ideal containing ${\bf x}R$.
Since $M$ is finitely generated, then $M_{\fm}\neq (\frac{\bf x}{1})M_{\fm}$; so that
$\frac{\bf x}{1}=\frac{x_{1}}{1},\ldots,\frac{x_{\ell}}{1}$ is a strong parameter sequence
on $M _{\fm}$. Hence $\frac{\bf x}{1}$ is $M_{\fm}$-regular
sequence. Thus $\bf x$ is $M$-regular sequence. Therefore $M$ is Cohen-Macaulay.
\end{proof}

As mentioned in the introduction, every coherent regular ring is
Cohen-Macaulay. In particular, every valuation domain is
Cohen-Macaulay. In the following, we show that every torsion-free
module over such domain is Cohen-Macaulay. In fact, we do this for
torsion-free modules over almost valuation domains. Recall that an
integral domain $R$ with quotient field $K$ is called an {\em almost
valuation domain} if for every nonzero $x\in K$, there exists an
integer $n\geq 1$ such that either $x^n\in R$ or $x^{-n}\in R$
\cite{AZ}.

\begin{prop}
Every torsion-free module over an almost valuation domain is
Cohen-Macaulay.
\end{prop}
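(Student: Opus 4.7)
The plan is to reduce to sequences of length at most one and then exploit torsion-freeness directly.

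First I would make the key radical observation: if $R$ is an almost valuation domain with quotient field $K$ and $x, y \in R$ are nonzero, then applying the hypothesis to $x/y \in K$ yields an integer $n \geq 1$ with $x^n \in (y)$ or $y^n \in (x)$; hence $\sqrt{(x,y)R}$ equals $\sqrt{(x)R}$ or $\sqrt{(y)R}$. An easy induction on $\ell$ then shows that, for any finite sequence ${\bf x} = x_1, \ldots, x_\ell$ of nonzero elements of $R$, there is an index $k$ with $\sqrt{{\bf x}R} = \sqrt{(x_k)R}$.

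Next I would show that no sequence of length $\geq 2$ can be a parameter sequence on any $R$-module $M$. Indeed, suppose ${\bf x}$ of length $\ell \geq 2$ is a parameter sequence on $M$. Then ${\bf x}M \neq M$, so $\Supp_R(M/{\bf x}M) \neq \emptyset$, and the parameter-sequence condition forces $H^\ell_{\bf x}(M) \neq 0$. On the other hand, Proposition \ref{ceck}(1) combined with the radical observation gives $H^\ell_{\bf x}(M) \cong H^\ell_{x_k}(M)$, and the latter vanishes because the \v{C}ech complex of a single element is concentrated in degrees $0$ and $1$. (If some $x_i = 0$, the same conclusion is even easier, since the radical drops to a shorter subsequence.) This is a contradiction.

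Consequently every strong parameter sequence on the torsion-free $R$-module $M$ has length $0$ or $1$. The empty sequence is vacuously $M$-regular. If ${\bf x} = x_1$ is a length-one strong parameter sequence, then $x_1 M \neq M$ forces $x_1 \neq 0$, and torsion-freeness of $M$ over the domain $R$ makes multiplication by $x_1$ injective on $M$, so $x_1$ is $M$-regular. By Definition \ref{CM}, $M$ is Cohen-Macaulay. The only subtlety is the radical reduction at the start; the rest is formal application of Proposition \ref{ceck}(1) and the definition.
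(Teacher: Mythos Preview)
Your argument is correct and follows the same route as the paper's: use the almost-valuation hypothesis to see that $\sqrt{{\bf x}R}$ coincides with the radical of a principal ideal, apply Proposition \ref{ceck}(1) to conclude $H^{\ell({\bf x})}_{\bf x}(M)=0$ whenever $\ell({\bf x})\geq 2$, deduce there are no (strong) parameter sequences of length $\geq 2$, and handle length one via torsion-freeness. The paper only writes out the case $\ell=2$ explicitly (which already suffices, since a strong parameter sequence of length $\geq 2$ has a length-two initial segment that must be a parameter sequence), whereas you carry out the full induction; the content is the same.

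One small correction: the sentence ``$x_1 M \neq M$ forces $x_1 \neq 0$'' is not right, since for any nonzero $M$ one has $0\cdot M=0\neq M$. The correct reason $x_1=0$ cannot be a length-one parameter sequence is that $R_0=0$, so $H^1_{0}(M)=0$ and condition (3) of the definition fails.
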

\begin{proof}
Suppose that $(R,\fm)$ is an almost valuation domain and $M$ is a
torsionfree $R$-module. Assume that ${\bf x}:=x_{1},x_{2}$ is a
sequence in $R$ of length 2. Assume that $x_1^nR\subseteq x_2^nR$
for some positive integer $n$. Then
$$H^{2}_{\bf x}(M)=H^{2}_{x_{1}^{n},x_{2}^{n}}(M)=H^{2}_{x_{2}^{n}}(M)=0.$$
Hence ${\bf x}=x_{1},x_{2}$ can not be a parameter sequence on $M$.
Then, for each parameter sequence ${\bf x}$ of $M$, $\ell(\bf
x)\leq1$. Therefore $M$ is Cohen-Macaulay. To this end, one notices
that $M$ is torsion-free.
\end{proof}

Recall that the module $M$ is
called \emph{Cohen-Macaulay in the
sense of ideals (resp. finitely generated ideals)} if
$\h_M(I)=\pgrade_R(I,M)$ for all ideals
(resp. finitely generated ideals) $I$, see \cite[Definition 3.1]{AT}.
The following proposition generalizes
\cite[Theorem 3.4]{AT} to finitely generated modules.

\begin{prop}
Let $R$ be a ring and $M$ be a finitely generated $R$-module. If $M$ is
Cohen-Macaulay in the sense of ideals (or finitely generated ideals),
then $M$ is Cohen-Macaulay  in the sense of Definition \ref{CM}.
\end{prop}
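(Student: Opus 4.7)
The plan is to verify the criterion of Theorem \ref{6.2}(3): every strong parameter sequence ${\bf x}$ on $M$ satisfies $\pgrade_R({\bf x}R,M)=\ell({\bf x})$. Fix such an ${\bf x}$. The hypothesis concerns either all ideals or all finitely generated ideals; since ${\bf x}R$ is finitely generated, either form of the assumption applies directly to it, giving
\begin{displaymath}
\pgrade_R({\bf x}R,M)=\h_M({\bf x}R).
\end{displaymath}

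The next step is to produce the two inequalities sandwiching $\h_M({\bf x}R)$ between $\ell({\bf x})$ and $\pgrade_R({\bf x}R,M)$. For the lower bound, since $M$ is finitely generated and ${\bf x}$ is a parameter sequence on $M$, Proposition \ref{5.4}(1) yields $\h_M({\bf x}R)\geq\ell({\bf x})$. For the upper bound, the polynomial grade of an ideal generated by $\ell$ elements is at most $\ell$ by \cite[Section 5.5, Theorem 13]{No}, so $\pgrade_R({\bf x}R,M)\leq\ell({\bf x})$. Combining these with the hypothesis gives
\begin{displaymath}
\ell({\bf x})\leq\h_M({\bf x}R)=\pgrade_R({\bf x}R,M)\leq\ell({\bf x}),
\end{displaymath}
hence $\pgrade_R({\bf x}R,M)=\ell({\bf x})$.

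Since ${\bf x}$ was an arbitrary strong parameter sequence on $M$, the equivalence $(3)\Leftrightarrow(1)$ in Theorem \ref{6.2} shows that $M$ is Cohen-Macaulay in the sense of Definition \ref{CM}. There is no serious obstacle here; the proof is essentially a bookkeeping argument that puts together Proposition \ref{5.4}(1), the standard upper bound on polynomial grade, and Theorem \ref{6.2}. The only point worth noting is that finite generation of $M$ is used exactly once, in invoking Proposition \ref{5.4}(1), and that the hypothesis is used only for the finitely generated ideal ${\bf x}R$, which is why either version of ``Cohen-Macaulay in the sense of ideals'' suffices.
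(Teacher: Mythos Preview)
Your proof is correct and follows essentially the same route as the paper: both use Proposition~\ref{5.4}(1) for $\h_M({\bf x}R)\geq\ell({\bf x})$, the standard bound from \cite[Section 5.5, Theorem 13]{No} for $\pgrade_R({\bf x}R,M)\leq\ell({\bf x})$, and then the hypothesis $\h_M({\bf x}R)=\pgrade_R({\bf x}R,M)$ to force equality throughout, concluding via Theorem~\ref{6.2}. Your additional remarks on why the finitely-generated-ideals version suffices and where finite generation of $M$ enters are accurate and helpful, but the underlying argument is the same.
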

\begin{proof}
Assume that ${\bf x}$ is a strong parameter sequence on $M$. Then, by
Proposition \ref{5.4} and \cite[Section 5.5, Theorem 13]{No}, we have
$$\h_{M}({\bf x}R)\geq \ell({\bf x}) \geq \pgrade_R({\bf x}R,M).$$
However, by assumption $\pgrade_R({\bf x}R,M)=\h_{M}({\bf x}R)$. Then
$\pgrade_R({\bf x}R,M)=\ell({\bf x})$.
Therefore $M$ is Cohen-Macaulay  in the sense of Definition \ref{CM}.
\end{proof}

\subsection{The Cohen-Macaulayness of some constructions}

Let $R$ and $S$ be two commutative rings with unity, let $J$
be an ideal of $S$ and $f:R\to S$  be a ring
homomorphism. The subring
$R\bowtie^{f}J:=\{(x,f(x)+j) | x\in R \text{ and } j\in J\}$
of $R\times S$ is called the \emph{amalgamation of R with S along J
with respect to f} \cite{DFF}. This construction
generalizes several classical constructions. Among them is the Nagata's
trivial extension, see \cite[Examples 2.5 and 2.6]{DFF}.

Under mild conditions, the next proposition shows that
the Cohen-Macaulayness of $R\bowtie^{f}J$ descends to that of $R$.

\begin{prop}
Let $R$ and $S$ be commutative rings with unity, let $J$
be an ideal of $S$ and $f:R\to S$  be a ring
homomorphism. Assume that $J$ is flat as an
$R$-module induced by $f$. If $R\bowtie^{f}J$ is
Cohen-Macaulay, then $R$ is Cohen-Macaulay. Moreover,
if any strong parameter sequence on $J$ is a strong
parameter sequence on $R$, then $J$ is Cohen-Macaulay.
\end{prop}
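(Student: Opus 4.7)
The plan is to exploit the natural ring embedding $\iota\colon R\to R\bowtie^{f}J$ given by $x\mapsto(x,f(x))$ and reduce both assertions to Proposition~\ref{flat}. First I would check that, viewed as an $R$-module through $\iota$, the ring $R\bowtie^{f}J$ splits as an internal direct sum $\iota(R)\oplus\{(0,j)\mid j\in J\}$, where the complement is the kernel of the split surjection $(x,f(x)+j)\mapsto x$ and is isomorphic to $J$ as an $R$-module via $f$. Since $J$ is $R$-flat by hypothesis, $R\bowtie^{f}J\cong R\oplus J$ is then flat over $R$; and because $\iota(R)$ is a direct summand, one has $\fm(R\bowtie^{f}J)\cong \fm R\oplus\fm J\subsetneq R\oplus J$ for every maximal ideal $\fm\subset R$, so $\iota$ is faithfully flat.

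For the first claim I would then apply Proposition~\ref{flat} with $M=R$: the $R\bowtie^{f}J$-module $R\otimes_{R}(R\bowtie^{f}J)=R\bowtie^{f}J$ is Cohen-Macaulay by hypothesis, and Proposition~\ref{flat} therefore yields that $R$ is Cohen-Macaulay.

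For the second claim I would take an arbitrary strong parameter sequence ${\bf x}=x_{1},\ldots,x_{\ell}$ on $J$. The supplementary hypothesis makes ${\bf x}$ a strong parameter sequence on $R$ as well, and combined with the first part it follows that ${\bf x}$ is an $R$-regular sequence. The remaining step is to transfer regularity from $R$ to $J$ using flatness of $J$: for each $i$ the multiplication map $R/(x_{1},\ldots,x_{i-1})R\xrightarrow{\ \cdot x_{i}\ }R/(x_{1},\ldots,x_{i-1})R$ is injective, and tensoring with the flat $R$-module $J$ preserves injectivity, so $x_{i}$ is a non-zero-divisor on $J/(x_{1},\ldots,x_{i-1})J$. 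Since ${\bf x}J\neq J$ by condition~(2) of the definition of parameter sequence applied to ${\bf x}$ on $J$, the sequence ${\bf x}$ is in fact $J$-regular, whence $J$ is Cohen-Macaulay. The only subtle step is the initial verification that $\iota$ is faithfully flat; afterward both claims follow formally from Proposition~\ref{flat} together with the fact that tensoring an injection of $R$-modules with the flat module $J$ remains injective.
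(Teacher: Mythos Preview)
Your proof is correct and follows essentially the same route as the paper: both identify $R\bowtie^{f}J\cong R\oplus J$ as $R$-modules to see that the natural embedding $\iota$ is faithfully flat, then invoke Proposition~\ref{flat} for the first assertion. For the second assertion the paper merely writes ``the rest of the conclusion is clear since $J$ is flat,'' whereas you spell out exactly why---namely that an $R$-regular sequence remains weakly $J$-regular after tensoring the multiplication maps with the flat module $J$, and ${\bf x}J\neq J$ comes for free from the parameter-sequence condition; this is precisely the argument the paper leaves implicit.
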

\begin{proof}
Note that as an $R$-module $R\bowtie^{f}J\cong R\oplus J$.
This in conjunction with the assumption implies the natural
embedding $\iota_R:R\longrightarrow R\bowtie^{f}J$ is faithfully
flat. Hence, by Proposition \ref{flat}, $R$ is Cohen-Macaulay.
The rest of the conclusion is clear since $J$ is flat.
\end{proof}

We do not know whether the Cohen-Macaulay property of $R$ ascends to
that of $R\bowtie^{f}J$. The difficulty lies in linking strong
parameter sequences of $R\bowtie^{f}J$ to strong parameter sequences
of $R$ and $J$. However, we solve this difficulty under certain
assumptions. Let $M$ be an $R$-module. Then $R\ltimes M$ denotes the
\emph{trivial extension} of $R$ by $M$. As indicated in
\cite[Example 2.8]{DFF}, if $S:=R\ltimes M$, $J:=0\ltimes M$, and
$f:R\to S$ be the natural embedding, then $R\bowtie^f J\cong
R\ltimes M$.

Assume that $R$ is Noetherian local and that $M$
is finitely generated. It is well known that the trivial extension
$R\ltimes M$ is Cohen-Macaulay if and only if $R$ is Cohen-Macaulay
and $M$ is maximal Cohen-Macaulay, see \cite[Corollary 4.14]{AW}.
Our main theorem in this paper generalizes this result.
To prove it we need the following lemma.

\begin{lem}\label{w}
Let $R$ be a ring and $M$ be an $R$-module. Set $S:=R\ltimes M$.
Let $\pi:S\to R$ be the natural projection. Then ${\bf x}\subseteq S$ is
$S$-weakly proregular if and only if $\pi({\bf x})$ is $R$ and $M$-weakly proregular.
\end{lem}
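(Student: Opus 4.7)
\emph{Plan.} The strategy is to use Theorem~\ref{3.3} to convert both sides of the statement into vanishing conditions on \v{C}ech cohomology applied to injective modules, and then compare the two using the $R$-module decomposition $S=R\oplus M$. Write $x_j=(r_j,m_j)\in S$ so that ${\bf r}:=\pi({\bf x})=(r_1,\ldots,r_\ell)$, and let $\iota\colon R\hookrightarrow S$ denote the inclusion $r\mapsto(r,0)$.

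First I would show $\sqrt{{\bf x}S}=\sqrt{\iota({\bf r})S}$: indeed, $x_j-\iota(r_j)=(0,m_j)$ squares to zero in $S$ since $M\cdot M=0$ in the trivial extension, so the two generating sets differ only by nilpotents. Combining Proposition~\ref{ceck}(1) with the change-of-rings formula Proposition~\ref{ceck}(2) applied to $\iota$, one obtains $H^i_{\bf x}(N)\cong H^i_{\bf r}(N)$ for every $S$-module $N$, where the right-hand side is interpreted as $R$-\v{C}ech cohomology of $N$ viewed as an $R$-module. Theorem~\ref{3.3} (taken with the ring $S$ and module $S$, so that $\Hom_S(S,E)=E$) then identifies ``${\bf x}$ is $S$-weakly proregular'' with the vanishing of $H^i_{\bf r}(E)$ for every injective $S$-module $E$ and every $i\neq 0$.

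The pivotal step is to change the parameterizing class for injectives. Using the adjunction $(-\otimes_R S)\dashv \Hom_R(S,-)$, the module $\Hom_R(S,E')$ is $S$-injective whenever $E'$ is an injective $R$-module, and the unit of the adjunction gives an $S$-linear embedding of every $S$-module into some such $\Hom_R(S,E')$; in particular, each injective $S$-module is a direct summand of one. Together with the additivity of $H^i_{\bf r}$, this lets me replace the test class ``injective $S$-modules'' by ``$\Hom_R(S,E')$ as $E'$ ranges over injective $R$-modules'' without losing information.

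Now the $R$-module isomorphism $S\cong R\oplus M$ yields $\Hom_R(S,E')\cong E'\oplus \Hom_R(M,E')$, and additivity splits
\[ H^i_{\bf r}(\Hom_R(S,E'))\cong H^i_{\bf r}(E')\oplus H^i_{\bf r}(\Hom_R(M,E')). \]
Hence the vanishing of the left-hand side for all injective $R$-modules $E'$ and all $i\neq 0$ is equivalent to the simultaneous vanishing of both summands, which by Theorem~\ref{3.3} applied respectively to the $R$-modules $R$ and $M$ amounts exactly to ${\bf r}$ being both $R$- and $M$-weakly proregular. The only real technical point I expect is the injective-exhaustion step of the third paragraph; the rest is routine bookkeeping about radicals, change of rings, and the additivity of \v{C}ech cohomology.
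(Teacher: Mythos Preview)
Your argument is correct. Both proofs begin the same way, using the radical equality $\sqrt{{\bf x}S}=\sqrt{\iota({\bf r})S}$ (from the nilpotence of $(0,m_j)$) together with Proposition~\ref{ceck} and Theorem~\ref{3.3}/Corollary~\ref{3.4} to reduce to the sequence $\iota({\bf r})=(\pi({\bf x}),0)$ coming from $R$. From there the two diverge. The paper stays on the Koszul-homology side of the definition: since multiplication by $(y,0)$ on $S$ respects the $R$-module splitting $S\cong R\oplus M$, one has $H_i(({\bf y}^n,0),S)\cong H_i({\bf y}^n,R)\oplus H_i({\bf y}^n,M)$ for all $i$ and $n$, compatibly with the transition maps $\phi^m_n$, so the pro-zero condition factors immediately. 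You instead invoke Theorem~\ref{3.3} a second time over $S$, then use the coinduction adjunction to replace the test class ``injective $S$-modules'' by modules of the form $\Hom_R(S,E')$, and finally split $\Hom_R(S,E')\cong E'\oplus\Hom_R(M,E')$ before applying Theorem~\ref{3.3} over $R$ to each summand. Your route is a little longer and needs the injective-summand step you flagged, but it is entirely valid; the paper's approach is more elementary because the decomposition $S=R\oplus M$ is visible already at the level of the Koszul complexes, so no passage through injectives is required.
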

\begin{proof}
First of all notice that using the structure of prime spectrum
of the trivial extension one has $\sqrt{{\bf x}S}=\sqrt{\pi({\bf x})S}$.
Using this together with Proposition \ref{ceck}(2) and Theorem \ref{3.3},
one can deduce that ${\bf x}$ is $S$-weakly proregular if and only
if $(\pi({\bf x}),0)$ is $S$-weakly proregular. Thus, it suffices to prove
that if ${\bf y}$ is a finite sequence of elements from $R$,
then ${\bf y}$ is $R$ and $M$-weakly proregular if and only
if $({\bf y},0)$ is $S$-weakly proregular. However, as $R$-modules,
$H_i(({\bf y}^n,0),S)\cong H_i({\bf y}^n,R)\oplus H_i({\bf y}^n,M)$ for all $i$ and $n$.
Therefore the definition of weakly proregular sequence completes the proof.
\end{proof}

\begin{thm}\label{th}
Let $R$ be a ring and $M$ be an $R$-module such that every
$R$-weakly proregular sequence is an $M$-weakly proregular sequence.
Then $R\ltimes M$ is Cohen-Macaulay if and only if $R$ is
Cohen-Macaulay and every $R$-regular sequence is a weak $M$-regular
sequence.
\end{thm}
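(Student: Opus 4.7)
Set $S := R \ltimes M$ and write $\pi : S \to R$ for the natural projection. My plan is to translate strong parameter sequences between $S$ and $R$ using the $R$-module decomposition $S \cong R \oplus M$, and then invoke the diagonal action of $R$-elements on $S \cong R \oplus M$ to identify the $S$-regularity of $({\bf x},0)$ with joint regularity conditions on ${\bf x}$ over $R$ and $M$. Three structural facts underpin the argument. First, since every element of $0 \ltimes M$ is nilpotent, $\Spec S = \{\fp \ltimes M : \fp \in \Spec R\}$; consequently $\sqrt{{\bf x}_S S} = \sqrt{(\pi({\bf x}_S),0)S}$ for any finite ${\bf x}_S \subseteq S$, so Proposition \ref{5.3}(2) (applied truncation by truncation) reduces the analysis to sequences of the form $({\bf x},0)$ with ${\bf x} \subseteq R$. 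Second, combining Proposition \ref{ceck}(2) with the $R$-module splitting of $S$ and flat base change along $R \to R_{\fp}$ yields the identification
$$H^i_{({\bf x},0)}(S)_{\fp \ltimes M} \cong H^i_{\bf x}(R)_{\fp} \oplus H^i_{\bf x}(M)_{\fp}.$$
Third, $S/((x_1,0),\ldots,(x_{i-1},0))S \cong (R/(x_1,\ldots,x_{i-1})R) \ltimes (M/(x_1,\ldots,x_{i-1})M)$, on which $(x_i,0)$ acts diagonally, so $({\bf x},0)$ is $S$-regular iff ${\bf x}$ is both $R$-regular and weak $M$-regular; Lemma \ref{w} is the analogue for weak proregularity.

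For the forward direction, assume $S$ is Cohen-Macaulay and let ${\bf x}$ be a strong parameter sequence on $R$. For each truncation ${\bf x}'$ I would show that $({\bf x}',0)$ is a parameter sequence on $S$: weak proregularity comes from Lemma \ref{w}, where the standing hypothesis promotes $R$-weak proregularity of ${\bf x}'$ to $M$-weak proregularity; $({\bf x}',0)S \neq S$ follows from ${\bf x}'R \neq R$; and the nonvanishing condition at $\fp \ltimes M$ for $\fp \in \Supp_R(R/{\bf x}'R)$ is immediate, because the first summand $H^k_{{\bf x}'}(R)_{\fp}$ of the decomposition above is already nonzero. Cohen-Macaulayness of $S$ then forces $({\bf x},0)$ to be $S$-regular, so ${\bf x}$ is $R$-regular, and $R$ is Cohen-Macaulay. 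Running the same argument with an arbitrary $R$-regular sequence in place of ${\bf x}$ (which is a strong parameter sequence by Proposition \ref{5.3}(4)) yields the additional conclusion that every $R$-regular sequence is weak $M$-regular.

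For the reverse direction, assume $R$ is Cohen-Macaulay and every $R$-regular sequence is weak $M$-regular. Given a strong parameter sequence on $S$, the reduction above lets me write it as $({\bf x},0)$. I claim ${\bf x}$ is a strong parameter sequence on $R$; weak proregularity and ${\bf x}'R \neq R$ are immediate from Lemma \ref{w} and the fact that $({\bf x}',0)S \neq S$. The main obstacle is the nonvanishing condition: from $H^k_{({\bf x}',0)}(S)_{\fp \ltimes M} \neq 0$ the decomposition only yields $H^k_{{\bf x}'}(R)_{\fp} \oplus H^k_{{\bf x}'}(M)_{\fp} \neq 0$, while one needs the first summand to be nonzero. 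I close this gap using Proposition \ref{ceck}(4): since $k = \ell({\bf x}')$, $H^k_{{\bf x}'}(M) \cong H^k_{{\bf x}'}(R) \otimes_R M$, so if $H^k_{{\bf x}'}(R)_{\fp}$ vanished then so would $H^k_{{\bf x}'}(M)_{\fp}$, a contradiction. Hence ${\bf x}$ is a strong parameter sequence on $R$; Cohen-Macaulayness of $R$ (via Theorem \ref{6.2}) forces ${\bf x}$ to be $R$-regular; the standing hypothesis promotes it to weak $M$-regular; and the diagonal-action observation concludes that $({\bf x},0)$ is $S$-regular, so $S$ is Cohen-Macaulay.
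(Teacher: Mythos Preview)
Your argument follows essentially the same route as the paper's, but there is a gap in the reverse direction. You claim that Proposition~\ref{5.3}(2) ``reduces the analysis to sequences of the form $({\bf x},0)$'', and then, after showing that $({\bf x},0)=(\pi({\bf y}),0)$ is $S$-regular, you conclude ``so $S$ is Cohen-Macaulay''. But Proposition~\ref{5.3}(2) only transfers the \emph{parameter-sequence} property between ${\bf y}$ and $(\pi({\bf y}),0)$; it says nothing about transferring \emph{regularity}. Having established that $(\pi({\bf y}),0)$ is $S$-regular, you have not shown that the original strong parameter sequence ${\bf y}$ is $S$-regular, which is what Definition~\ref{CM} requires.

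The repair is routine but must be stated: once $(\pi({\bf y}),0)$ is $S$-regular one has $\pgrade_S((\pi({\bf y}),0)S,S)=\ell({\bf y})$, hence $H^i_{(\pi({\bf y}),0)}(S)=0$ for $i<\ell({\bf y})$; since $\sqrt{{\bf y}S}=\sqrt{(\pi({\bf y}),0)S}$, also $H^i_{\bf y}(S)=0$ for $i<\ell({\bf y})$, and then Theorem~\ref{6.2}(5) yields Cohen-Macaulayness of $S$. The paper sidesteps this detour by working throughout with \v{C}ech cohomology rather than regularity: it never reduces to $({\bf x},0)$, but computes $H^i_{\bf y}(S)\cong H^i_{\pi({\bf y})}(R)\oplus H^i_{\pi({\bf y})}(M)$ directly and shows both summands vanish for $i<\ell({\bf y})$ (the first by Theorem~\ref{6.2} applied to $R$, the second via Koszul homology of the weak $M$-regular sequences $\pi({\bf y})^n$), concluding with Theorem~\ref{6.2}(5). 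Apart from this point the two proofs coincide.
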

\begin{proof}
First assume that $R\ltimes M$ is Cohen-Macaulay and that ${\bf x}$
is a strong parameter sequence on $R$. So, in particular, ${\bf x}$
is $M$-weakly proregular. By Lemma \ref{w}, $({\bf x},0)\subseteq
R\ltimes M$ is $R\ltimes M$-weakly proregular. Also, one notices
that
$$\frac{R\ltimes M}{({\bf x},0)R\ltimes M}=\frac{R\ltimes M}{{\bf x}(R\ltimes M)}\cong\frac{R}{{\bf x}R}\ltimes\frac{M}{{\bf x}M}\neq0.$$
Finally, for a prime ideal $\fp\ltimes M$ of $R\ltimes M$ containing
$({\bf x},0)$, using Proposition \ref{ceck} and \cite[Exercise
6.2.12(iv)]{BSh}, one has
$$H^{\ell({\bf x})}_{({\bf x},0)}(R\ltimes M)_{\fp\ltimes M}\cong H^{\ell({\bf x})}_{{\bf x}}(R)_{\fp}\oplus H^{\ell({\bf x})}_{{\bf x}}(M)_{\fp}\neq0.$$

Therefore $({\bf x},0)$ is a strong parameter sequence on $R\ltimes
M$. So that $({\bf x},0)$ is a $R\ltimes M$-regular sequence. It
easily follows that ${\bf x}$ is $R$-regular sequence and that ${\bf
x}$ is weak $M$-regular sequence. This shows that $R$ is
Cohen-Macaulay and every $R$-regular sequence is a weak $M$-regular
sequence.

Conversely assume that $R$ is Cohen-Macaulay and every $R$-regular
sequence is a weak $M$-regular sequence. Let ${\bf x}$ be a strong
parameter sequence on $R\ltimes M$. Then, using Lemma \ref{w},
$\pi({\bf x})\subseteq R$ is $R$ and $M$-weakly proregular sequence.
Also notice that
$$R\ltimes M\neq\sqrt{{\bf x}(R\ltimes M)}=\sqrt{\pi({\bf x})(R\ltimes M)}=\sqrt{\pi({\bf x})R}\ltimes M.$$
Hence $R\neq\pi({\bf x})R$. Now let $\fp$ be a prime ideal of $R$
containing $\pi({\bf x})$. So that the prime ideal $\fp\ltimes M$ of
$R\ltimes M$ contains ${\bf x}$. Hence $H^{\ell({\bf x})}_{\bf
x}(R\ltimes M)_{\fp\ltimes M}\neq 0$. On the other hand, again using
Proposition \ref{ceck} and \cite[Exercise 6.2.12(iv)]{BSh}, one has
$$H^{\ell({\bf x})}_{\bf x}(R\ltimes M)_{\fp\ltimes M}\cong H^{\ell({\bf x})}_{\pi({\bf x})}(R\ltimes M)_{\fp\ltimes M}
\cong H^{\ell({\bf x})}_{\pi({\bf x})}(R)_{\fp} \oplus H^{\ell({\bf
x})}_{\pi({\bf x})}(M)_{\fp}.$$ Consequently, Proposition
\ref{ceck}(4) yields that $H^{\ell({\bf x})}_{\pi({\bf
x})}(R)_{\fp}\neq0$. This means that $\pi({\bf x})$ is a strong
parameter sequence on $R$; hence $\pi({\bf x})$ is $R$-regular
sequence and so by our assumption $\pi({\bf x})$ is a weak
$M$-regular sequence. Then $H^{i}_{\pi({\bf x})}(R)=0$ for all
$i<\ell({\bf x})$ by Theorem \ref{6.2}. Note that $\pi({\bf x})^n$
is a weak $M$-regular sequence for all positive integer $n$
(\cite[Exercise 1.1.10]{BH}), we have $H_{\ell({\bf x})-i}({\pi({\bf
x})}^n,M)=0$ for all $i<\ell({\bf x})$ (\cite[Theorem 1.6.16]{BH}).
Hence in view of \cite[Proposition 1.6.10(d)]{BH} $H^{i}_{\pi({\bf
x})}(M)=\varinjlim H^{i}({\pi({\bf x})}^n,M)=0$ for all $i<\ell({\bf
x})$. Therefore
$$H^{i}_{\bf x}(R\ltimes M)\cong H^{i}_{\pi({\bf x})}(R\ltimes M)\cong H^{i}_{\pi({\bf x})}(R)\oplus H^{i}_{\pi({\bf
x})}(M)=0$$ for all $i<\ell({\bf x})$. It now follows from Theorem
\ref{6.2} that $R\ltimes M$ is Cohen-Macaulay.
\end{proof}

We note that the backward direction in Theorem \ref{th} holds in
general, without supposing that every $R$-weakly proregular sequence
is an $M$-weakly proregular sequence.

Let $R$ be a ring and $M$ be an $R$-module. It can be seen from the
proof of the above theorem that $R\ltimes M$ is Cohen-Macaulay if
and only if every strong parameter sequence on $R\ltimes M$ of the
form $({\bf x},0)$ is a regular sequence.

\begin{cor}
(c.f. \cite[Corollary 4.14]{AW}) Let $R$ be a Noetherian local ring
and $M$ be a finitely generated nonzero $R$-module. The following
statements are equivalent:
\begin{enumerate}
  \item $R\ltimes M$ is Cohen-Macaulay;
  \item $R$ is Cohen-Macaulay and every $R$-regular sequence is an
  $M$-regular sequence;
  \item $R$ is Cohen-Macaulay and $M$ is maximal Cohen-Macaulay.
\end{enumerate}
\end{cor}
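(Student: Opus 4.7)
The plan is to derive $(1)\Leftrightarrow(2)$ as a direct specialization of Theorem \ref{th}, and then to recover $(2)\Leftrightarrow(3)$ from the classical theory of Cohen--Macaulay modules over Noetherian local rings.

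To apply Theorem \ref{th}, I first verify its hypothesis. Since $R$ is Noetherian and $M$ is finitely generated, Theorem \ref{3.6} guarantees that \emph{every} finite sequence of elements of $R$ is $M$-weakly proregular, so the condition ``every $R$-weakly proregular sequence is $M$-weakly proregular'' is automatic. Theorem \ref{th} then yields that (1) is equivalent to: $R$ is Cohen--Macaulay and every $R$-regular sequence is a \emph{weak} $M$-regular sequence. To upgrade ``weak $M$-regular'' to ``$M$-regular'' and thereby obtain (2), observe that any $R$-regular sequence ${\bf x}$ satisfies ${\bf x}R\neq R$ and hence lies in the unique maximal ideal $\fm$; because $M\neq 0$ is finitely generated, Nakayama's lemma gives ${\bf x}M\neq M$, so a weak $M$-regular sequence in $\fm$ is automatically $M$-regular. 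Conversely, every $M$-regular sequence is weak $M$-regular by definition. This establishes $(1)\Leftrightarrow(2)$.

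For $(2)\Leftrightarrow(3)$, assume throughout that $R$ is Cohen--Macaulay. For $(3)\Rightarrow(2)$: if $M$ is maximal Cohen--Macaulay then $\depth_R M=\dim R$, and any $R$-regular sequence (of length at most $\depth R=\dim R=\depth_R M$) can be extended to a system of parameters of $R$, which is $M$-regular by maximal Cohen--Macaulayness; thus every $R$-regular sequence is $M$-regular. For $(2)\Rightarrow(3)$: choose a maximal $R$-regular sequence ${\bf x}\subseteq\fm$, necessarily of length $\depth R=\dim R$ by the Cohen--Macaulayness of $R$. Hypothesis (2) makes ${\bf x}$ an $M$-regular sequence, so $\depth_R M\geq\dim R$; combining this with the standard inequality $\depth_R M\leq\dim M\leq\dim R$ (the last because $\Ann_R M$ is contained in every prime in $\Supp M$), all three quantities are equal and $M$ is maximal Cohen--Macaulay.

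No step constitutes a serious obstacle: the only delicate point is translating ``weak $M$-regular'' into ``$M$-regular'', which is resolved instantly by locality plus Nakayama, and the rest is a routine assembly of Theorems \ref{th} and \ref{3.6} with the standard depth/dimension inequalities for finitely generated modules over Noetherian local rings.
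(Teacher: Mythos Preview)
Your proposal is correct and follows precisely the approach the paper intends: the corollary is stated without proof immediately after Theorem \ref{th}, so the implicit argument is exactly your specialization of Theorem \ref{th} (with the weak-proregularity hypothesis supplied by Theorem \ref{3.6} together with Theorem \ref{3.3}), followed by the classical equivalence $(2)\Leftrightarrow(3)$ in the Noetherian local setting. The one place you go slightly beyond the paper is in spelling out, via Nakayama, why ``weak $M$-regular'' upgrades to ``$M$-regular'' when $M\neq 0$ is finitely generated over a local ring; this is a necessary detail the paper leaves to the reader.
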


\begin{center} {\bf ACKNOWLEDGMENT}

\end{center}
The authors would like to thank deeply the referee for her/his
careful reading and for some substantial comments.

\end{document}